\newtheorem{lemma}{Lemma}
\newtheorem{theorem}{Theorem}
\title{A residual driven multiscale method for Darcy's flow in perforated domains}
\date{}
\author[a]{Wei Xie}
\author[b]{Shubin Fu$^*$}
\author[c]{Yin Yang$^*$}
\author[d]{Yunqing Huang}
\affil[a]{Hunan Key Laboratory for Computation and Simulation in Science and Engineering, National Center for Applied Mathematics in Hunan, Xiangtan University, Xiangtan, Hunan 41105, China}
\affil[b]{Eastern Institute for Advanced Study, Eastern Institute of Technology, Ningbo,  Zhejiang 315200, China}
\affil[c]{Hunan Research Center of the Basic Discipline Fundamental Algorithmic Theory and Novel Computational Methods, Key Laboratory of Intelligent Computing and Information Processing of Ministry of Education, Xiangtan University, Xiangtan, Hunan 41105, China}
\affil[d]{Hunan International Scientific and Technological Innovation Cooperation Base of Computational Science, Xiangtan University, Xiangtan, Hunan 411105, China}
\begin{document}

\maketitle
% \tableofcontents
\renewcommand{\thefootnote}{}%
\footnotetext{$^*$Corresponding authors: Shubin Fu, Yin Yang}
\footnotetext{E-mail addresses: xiew@smail.xtu.edu.cn (Wei Xie), 
shubinfu@eias.ac.cn (Shubin Fu),
yangyinxtu@xtu.edu.cn (Yin Yang),
huangyq@xtu.edu.cn (Yunqing Huang)}
\addtocounter{footnote}{-1}

\begin{abstract}
In this paper, we present a residual-driven multiscale method for simulating Darcy flow in perforated domains, where complex geometries and highly heterogeneous permeability make direct simulations computationally expensive. To address this, we introduce a velocity elimination technique that reformulates the mixed velocity-pressure system into a pressure-only formulation, significantly reducing complexity by focusing on the dominant pressure variable.
Our method is developed within the Generalized Multiscale Finite Element Method (GMsFEM) framework. For each coarse block, we construct offline basis functions from local spectral problems that capture key geometric and physical features. Online basis functions are then adaptively enriched using residuals, allowing the method to incorporate global effects such as source terms and boundary conditions, thereby improving accuracy.
We provide detailed error analysis demonstrating how the offline and online spaces contribute to the accuracy and efficiency of the solution. Numerical experiments confirm the method’s effectiveness, showing substantial reductions in computational cost while maintaining high accuracy—particularly through adaptive online enrichment. These results highlight the method’s potential for efficient and accurate simulation of Darcy flow in complex, heterogeneous perforated domains.
\end{abstract}

\section{Introduction}

Perforated materials, consisting of solid and fluid phases, are essential to many natural and industrial processes. They play a critical role in applications such as groundwater management, oil extraction, soil stabilization, and pollutant filtration, in which understanding flow and transport phenomena is crucial. However, their complex geometries and multiple perforation scales make accurate modeling and prediction challenging. Addressing these issues often requires formulating and solving mathematical models that capture the underlying physical processes.

Traditional numerical methods often struggle with this problem because of the high computational cost required to resolve fine-scale heterogeneities and complex geometries. To overcome these challenges, researchers have developed various specialized methods \cite{maalqvist2014localization, brown2016multiscale, huang2001partition, huang2022finite, huang2026pinn, zhai2024new}. One prominent approach is to use upscaling techniques, which incorporate microscopic information into coarse-scale models without explicitly resolving fine-scale features. Examples include the homogenization method \cite{lions1980asymptotic,allaire1991homogenizationCPAM}, the heterogeneous multiscale method \cite{henning2009heterogeneous, abdulle2012heterogeneous}, and the multicontinuum homogenization method \cite{efendiev2023multicontinuum, xie2025multicontinuum}. These methods offer different strategies for bridging micro- and macro-scales, typically by solving a series of local ``cell'' problems.

Motivated by the concept of homogenization, Hou proposed the Multiscale Finite Element Method (MsFEM) \cite{hou1997multiscale}, which involves solving local problems to construct special basis functions \cite{le2014msfem,muljadi2015nonconforming}. Compared to traditional finite element bases, these multiscale basis functions capture significantly more local detail. Building on this idea, many multiscale methods have been developed to further enrich the multiscale solution space or to mitigate artificial boundary effects through oversampling techniques. Examples of such methods include the Generalized Multiscale Finite Element Method (GMsFEM) \cite{chung2016generalized, chung2016mixed, chung2017online, chung2017conservative, alikhanov2025multiscale, xie2025multiscale}, the wavelet-based edge multiscale finite element method \cite{fu2019edge,fu2021wavelet}, and the Constraint Energy Minimizing Generalized Multiscale Finite Element Method (CEM-GMsFEM) \cite{chung2018constraint, chung2021convergence, xie2024cem}.

Another important aspect in multiscale modeling is the preservation of key physical quantities. This aspect is especially crucial in long-term simulations of subsurface flow, where conserving quantities such as mass ensures both accuracy and stability. The mixed finite element method is a commonly used technique to enforce these conservation properties, thereby ensuring an accurate representation of key physical quantities in numerical simulations.

The mixed generalized multiscale finite element method (MGMsFEM) \cite{chung2015mixed, yang2020online} has since been applied to numerous practical problems, including linear poroelasticity \cite{wang2021mixed}, Stokes flow \cite{wang2023locally}, multiphase transport \cite{wang2021comparison}, and flow in perforated domains \cite{chung2016mixed, chung2018multiscale}. 
In  \cite{chen2020generalized}, a velocity elimination technique was combined with trapezoidal integration  \cite{wheeler2006multipoint} in the lowest-order Raviart–Thomas (RT0) finite element space to diagonalize the mass matrix.
This reformulation allows the original saddle-point problem to be re-expressed with pressure as the only unknown variable. As a result, a systematic approach was proposed to construct multiscale basis functions that focus solely on the pressure variable, applied independently within disjoint coarse blocks 
  \cite{he2021adaptive, he2021generalized, he2024online}.
To further accelerate the solution process, efficient preconditioners \cite{fu2023efficient, fu2024adaptive, ye2024highly, ye2024robust, ye2024fast} were developed to fully exploit the problem’s disjoint, parallelizable structure. 

\begin{figure}[htbp]
\centering
\begin{tikzpicture}
\draw[fill=gray!80] (-1, -1) rectangle (8, 5);
\draw[fill=white] (6.7, 2.3) ellipse (0.5 and 0.7);
\draw[fill=white] (0, 2) ellipse (0.5 and 0.9);
\draw[fill=white] (4.5, 3) ellipse (0.8 and 0.5);
\draw[fill=white] (1.5, 1.5) circle (0.5);
\draw[fill=white] (3.5, 2) circle (0.15);
\draw[fill=white] (2.7, 1.2) circle (0.2);
\draw[fill=white] (4.5, 1.8) circle (0.12);
\draw[fill=white] (4, 1) circle (0.5);
\draw[fill=white] (0.5, 0.5) circle (0.18);
\draw[fill=white] (1.5, 0.5) circle (0.15);
\draw[fill=white] (1.25, 2.6) circle (0.15);
\draw[fill=white] (2.5, 3) circle (0.5);
\draw[fill=white] (5.5, 2) circle (0.12);
\draw[fill=white] (1.25, -0.1) circle (0.15);
\draw[fill=white] (2.5, 0) circle (0.25);
\draw[fill=white] (4, -0.15) circle (0.13);
\draw[fill=white] (5.5, -0.2) circle (0.12);
\draw[fill=white] (1.25, 4) circle (0.1);
\draw[fill=white] (2.5, 4.2) circle (0.15);
\draw[fill=white] (4, 3.75) circle (0.13);
\draw[fill=white] (5.5, 3.9) circle (0.22);
% \node at (0.5,3.5)   {$\Omega^{\epsilon}$};
\end{tikzpicture}
\caption{Illustration of a perforated domain $\Omega^{\epsilon}$.}
\label{fig:perforateddomain_example}
\end{figure}
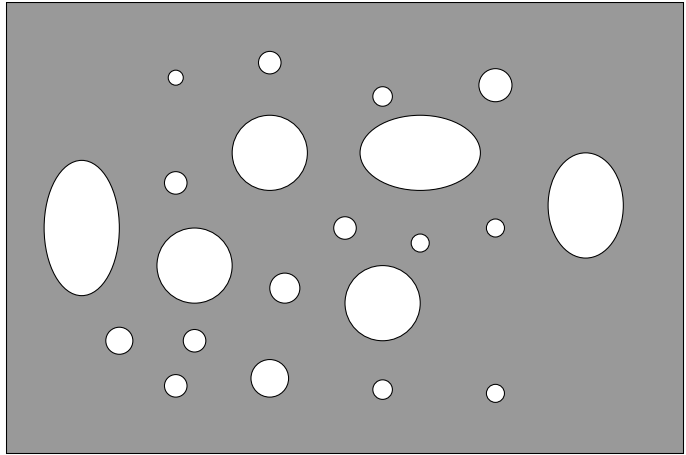

In this work, we propose a multiscale method specifically for Darcy flow in perforated domains (see Figure \ref{fig:perforateddomain_example}). By applying the trapezoidal integration technique \cite{wheeler2006multipoint}, we reformulate the mixed Darcy problem as a pressure-only system, which enables the construction of pressure-based multiscale basis functions. Our approach consists of two main stages: an offline stage and an online stage. In the offline stage, we solve a local spectral problem in each coarse block and select the eigenfunctions corresponding to the smallest eigenvalues to form the offline space. We define $\Lambda$  as the smallest eigenvalue whose associated eigenfunction is not included in this space. In the online stage, residual-based indicators are used to identify coarse blocks that require enrichment. Additional basis functions are then constructed locally for those blocks. Furthermore, we introduce an adaptive algorithm with a user-defined parameter $\theta$ to control the number of basis functions added at each iteration.
We provide a rigorous numerical analysis demonstrating that the error in the offline stage is bounded by $\Lambda$, whereas the online error depends on $\Lambda$, $\theta$, and the number of online iterations. These theoretical findings are confirmed by extensive numerical experiments, which also demonstrate the efficiency and robustness of the proposed method.

Our main contributions are:
\begin{enumerate}
    \item Development of a multiscale model reduction framework for Darcy flow in perforated domains.
    \item Formulation of local cell problems under various scenarios to enhance adaptability and accuracy.
    \item Detailed and rigorous numerical analysis of the proposed method.
    \item Comprehensive numerical experiments that validate the efficiency and robustness of the method.
\end{enumerate}

This paper is organized as follows.
In Section \ref{sec:preliminaries}, we present the model problem, the fine-scale discretization, and the process of reformulating the system into a pressure-only form.
The construction of offline and online basis functions is detailed in Sections \ref{sec:offline_stage} and \ref{sec:online_stage}, respectively.
In Section \ref{sec:convergence_results}, we provide the numerical analysis of the proposed method.
Section \ref{sec:numerical_results} demonstrates the numerical results.
Finally, the conclusion is provided in Section \ref{sec:conclusions}.

\section{Preliminaries} \label{sec:preliminaries}

In this section, we introduce the model problem, describe the fine-grid discretization, and present the velocity elimination technique used to transform the mixed saddle-point problem into a pressure-only algebraic system.

\subsection{Problem setup}

Let $\Omega \subset \mathbb{R}^2$ be a bounded convex domain. 
The perforated domain $\Omega^{\epsilon}$ is obtained by removing a set of perforations $\mathcal{B}^{\epsilon}$ from $\Omega$, as illustrated in Figure~\ref{fig:perforateddomain_example}. We consider the mixed formulation of the Darcy flow problem with heterogeneous permeability coefficients:

\begin{equation}
\begin{cases}
\begin{aligned}
\kappa^{-1} \mathbf{u} + \nabla p &= 0,
&\text{in } \Omega^{\epsilon}, \\
\mathrm{div}(\mathbf{u}) &= f,
&\text{in } \Omega^{\epsilon}.
\end{aligned}
\end{cases}
\label{eq:pde_mixed}
\end{equation}
Here, $\mathbf{u}$ denotes the Darcy velocity, $p$ the pressure, and $f$ the source term.
The permeability field $\kappa$ is assumed to be heterogeneous.
For simplicity, we impose a no-flux boundary condition $\mathbf{u} \cdot \mathbf{n} = 0$ on $\partial \Omega^{\epsilon}$, where $\mathbf{n}$ denotes the unit outward normal vector.

The Sobolev spaces for velocity and pressure are defined as:
\[
\mathbf{V} = \{\mathbf{v} \in [L^2(\Omega^{\epsilon})]^2 \mid \mathrm{div}(\mathbf{v}) \in L^2(\Omega^{\epsilon}),~ \mathbf{v} \cdot \mathbf{n} = 0~ \text{on } \partial \Omega^{\epsilon}\}, \quad Q = L^2(\Omega^{\epsilon}).
\]

The weak formulation of problem~\eqref{eq:pde_mixed} is to find $(\mathbf{u}, p) \in \mathbf{V} \times Q$ such that:
\begin{equation}
\begin{cases}
\begin{aligned}
\int_{\Omega^{\epsilon}} \kappa^{-1} \mathbf{u} \cdot \mathbf{v}
- \int_{\Omega^{\epsilon}} \mathrm{div}(\mathbf{v}) p &= 0, 
&\forall \mathbf{v} \in \mathbf{V}, \\
\int_{\Omega^{\epsilon}} \mathrm{div}(\mathbf{u}) q
&= \int_{\Omega^{\epsilon}} f q, 
&\forall q \in Q.
\end{aligned}
\end{cases}
\label{eq:weak_darcy}
\end{equation}

\subsection{Fine-grid discretization}

Let $\mathcal{T}_h$ denote a fine partition of $\Omega^{\epsilon}$ with mesh size $h$, sufficiently small to resolve both the geometry of $\mathcal{B}^{\epsilon}$ and the variations in $\kappa$.
While the current work focuses on rectangular grids, similar constructions extend naturally to triangular, general quadrilateral, and hexahedral meshes; see~\cite{wheeler2006multipoint,he2021generalized,hou2014numerical} for related developments.
The permeability $\kappa_w$ is assumed constant within each element $w \in \mathcal{T}_h$. Let $\mathcal{E}_h$ and $\mathcal{E}_h^0$ denote the sets of all fine-grid edges and interior edges, respectively.

Among the many mixed finite element methods (cf.~\cite{boffi2013mixed}), we use the lowest-order Raviart--Thomas element (RT0) due to its advantageous numerical integration properties. The finite element spaces are given by:
\[
\begin{aligned}
\mathbf{V}_h &= \{ \mathbf{v}_h \in \mathbf{V} \mid \mathbf{v}_h(w) = (\alpha_1 + \beta_1 x_1, \alpha_2 + \beta_2 x_2),\ \alpha_i, \beta_i \in \mathbb{R},~ w \in \mathcal{T}_h \}, \\
Q_h &= \{ q_h \in Q \mid q_h(w)~\text{is constant},~ w \in \mathcal{T}_h \}.
\end{aligned}
\]

Let $\{ \boldsymbol{\phi}_e \}_{e \in \mathcal{E}_h^0}$ and $\{ \varphi_w \}_{w \in \mathcal{T}_h}$ be the basis functions for $\mathbf{V}_h$ and $Q_h$, respectively. The discrete solution is expressed as:
\begin{equation}
\mathbf{u}_h = \sum_{e \in \mathcal{E}_h^0} u_e \boldsymbol{\phi}_e, \quad
p_h = \sum_{w \in \mathcal{T}_h} p_w \varphi_w.
\label{eq:uhph_express}
\end{equation}

Substituting~\eqref{eq:uhph_express} into~\eqref{eq:weak_darcy} leads to the discrete variational problem: find $(\mathbf{u}_h, p_h) \in \mathbf{V}_h \times Q_h$ such that
\begin{equation}
\begin{cases}
\begin{aligned}
\int_{\Omega^{\epsilon}} \kappa^{-1} \mathbf{u}_h \cdot \mathbf{v}_h 
- \int_{\Omega^{\epsilon}} \mathrm{div}(\mathbf{v}_h) p_h &= 0, 
&\forall \mathbf{v}_h \in \mathbf{V}_h, \\
\int_{\Omega^{\epsilon}} \mathrm{div}(\mathbf{u}_h) q_h 
&= \int_{\Omega^{\epsilon}} f q_h, &\forall q_h \in Q_h.
\end{aligned}
\end{cases}
\label{eq:weak_rt_fine}
\end{equation}

This formulation results in the following saddle-point system:
\begin{equation}
\begin{bmatrix}
M & -B \\
-B^T & \mathbf{0}
\end{bmatrix}
\begin{bmatrix}
\mathbf{u}_h \\ p_h
\end{bmatrix} =
\begin{bmatrix}
\mathbf{0} \\ -F
\end{bmatrix},
\label{eq:matrix_UPh}
\end{equation}
where $\mathbf{u}_h$ and $p_h$ are coefficient vectors corresponding to the velocity and pressure basis functions, respectively. 
For notational simplicity, we use the same symbols as for the continuous variables.
The matrix entries are defined as
\[
M_{ij} := \int_{\Omega^{\epsilon}} \kappa^{-1} \boldsymbol{\phi}_j \cdot \boldsymbol{\phi}_i, \quad
B_{ij} := \int_{\Omega^{\epsilon}} \varphi_j \, \mathrm{div}(\boldsymbol{\phi}_i), \quad
F_i := \int_{\Omega^{\epsilon}} f \varphi_i.
\]

We now describe how to eliminate the velocity unknowns using the trapezoidal integration rule~\cite{arbogast1997mixed,he2021adaptive,ye2024robust}.
From the matrix equation~\eqref{eq:matrix_UPh}, we have
\[
p_h = (B^T M^{-1} B)^{-1} F, \qquad \mathbf{u}_h = M^{-1} B p_h.
\]
This formulation is effective when:
(1) the inverse of the mass matrix $M$ is cheap to compute;
(2) the matrix $B^T M^{-1} B$ retains a sparse structure.

For a quadrilateral element $w \in \mathcal{T}_h$ with corner points $\mathbf{y}_i$, $1 \le i \le 4$, we apply the trapezoidal rule:
\begin{equation}
\int_{w} \kappa^{-1} \mathbf{u}_h \cdot \mathbf{v}_h \approx
\frac{|w|}{4} \sum_{i=1}^4 \kappa_w^{-1} (\mathbf{u}_h \cdot \mathbf{v}_h)(\mathbf{y}_i)
= \frac{|w|}{2} \sum_{e \in \partial w} \kappa_w^{-1} u_e v_e.
\label{eq:kuv_trapezoidal}
\end{equation}
This implies that the mass matrix $M$ is diagonal and can be inverted efficiently.
A more detailed derivation of this quadrature approximation can be found in~\cite{he2021adaptive,ye2024robust}.

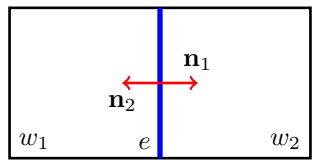
\begin{figure}[htbp]
\centering
\begin{tikzpicture}
\draw[line width=1pt] (0,0) rectangle (4,2);
\draw[blue, line width=2pt] (2,0) -- (2,2);
\draw[red, line width=1pt, ->] (2,1) -- (2.5,1);
\draw[red, line width=1pt, ->] (2,1) -- (1.5,1);
\node[anchor=south west] at (0,0)   {$w_1$};
\node[anchor=south east] at (4,0)   {$w_2$};
\node[anchor=south east] at (2,0)   {$e$};
\node[anchor=north] at (2.5,1.5)   {$\mathbf{n}_1$};
\node[anchor=south] at (1.5,0.5)   {$\mathbf{n}_2$};
\end{tikzpicture}
\caption{Two fine-grid elements, $w_1$ and $w_2$, sharing a common edge $e$.}
\label{fig:z1z2}
\end{figure}

We now show how to recover the velocity from the pressure. Let $\boldsymbol{\phi}$ be the RT0 basis function associated with edge $e$ (see Figure~\ref{fig:z1z2}). Then,
\[
\begin{aligned}
\int_{w_1 \cup w_2} \kappa^{-1} \mathbf{u}_h \cdot \boldsymbol{\phi}
&= \int_{w_1 \cup w_2} \mathrm{div}(\boldsymbol{\phi}) p_h, \\
\frac{1}{2} (|w_1|\kappa_1^{-1} + |w_2|\kappa_2^{-1}) u_e &= \int_{e} p_h \, \boldsymbol{\phi} \cdot (\mathbf{n}_1 + \mathbf{n}_2),
\end{aligned}
\]
where $\kappa_i$ and $\mathbf{n}_i$ denote the permeability and outward normal on $w_i$, respectively.
Since $\mathbf{n}_1 = -\mathbf{n}_2$, we obtain:
\begin{equation}
u_e = \bar{\kappa}_e |e| \, \llbracket p_h \rrbracket_e,
\label{eq:veq_darcylaw}
\end{equation}
where $\bar{\kappa}_e = 2/(|w_1|/\kappa_1 + |w_2|/\kappa_2)$ and $\llbracket p_h \rrbracket_e = p_{w_2} - p_{w_1}$ is the pressure jump across edge $e$, oriented according to $\mathbf{n}_1$.

For mass conservation, we write:
\[
\int_{\Omega^{\epsilon}} q_h \, \mathrm{div}(\mathbf{u}_h) = \sum_{w \in \mathcal{T}_h} q_w \int_{\partial w} \mathbf{u}_h \cdot \mathbf{n} = \sum_{e \in \mathcal{E}_h^0} |e| \, \llbracket q_h \rrbracket_e \, u_e.
\]
Substituting~\eqref{eq:veq_darcylaw} gives the pressure-only formulation:
\begin{equation}
a(p_h, q_h) := \sum_{e \in \mathcal{E}_h^0} \bar{\kappa}_e |e|^2 \, \llbracket p_h \rrbracket_e \, \llbracket q_h \rrbracket_e = \int_{\Omega^{\epsilon}} f q_h.
\label{eq:algbra_pressure}
\end{equation}

This formulation eliminates the velocity unknowns and reduces the mixed problem to a pressure-only system. The pressure $p_h$ is computed from~\eqref{eq:algbra_pressure}, and the velocity $\mathbf{u}_h$ is then recovered from~\eqref{eq:veq_darcylaw}.
This approach is equivalent to a cell-centered finite difference scheme on structured grids, as discussed in~\cite{arbogast1997mixed}.

Although the stiffness matrix in~\eqref{eq:algbra_pressure} is sparse, it remains computationally expensive to invert for large-scale problems. To address this, we introduce a multiscale framework in Sections~\ref{sec:offline_stage} and~\ref{sec:online_stage}, which significantly reduces the cost through localized basis functions.
In particular, we approximate the inverse by constructing multiscale operators $R_0$ and $R_0^T$, such that
\[
(B^T M^{-1} B)^{-1} \approx R_0 (R_0^T B^T M^{-1} B R_0)^{-1} R_0^T.
\]

\section{Offline basis functions} \label{sec:offline_stage}

In this section, we present the offline stage of our method, which constructs the multiscale approximation space for the pressure variable prior to the simulation. This stage comprises three steps. First, we generate local snapshot basis functions by incorporating fine-scale information. Second, we solve a local spectral problem to reduce the dimensionality of the snapshot space. The offline space is then formed by selecting eigenfunctions corresponding to the smallest eigenvalues. Finally, the global pressure solution is computed in this reduced offline space.

We define $\mathcal{T}_H$ as the coarse partition of $\Omega^{\epsilon}$. Each coarse element $K \in \mathcal{T}_H$ is a union of fine-grid elements from $\mathcal{T}_h$ (see Figure~\ref{fig:perforateddomain_mesh} for an illustration). Let $N_c$ denote the number of coarse elements.

\begin{figure}[htbp]
\centering
\begin{tikzpicture}
\centering
\node[anchor=south west,inner sep=0] at (0,0)
{\includegraphics[height=7cm]{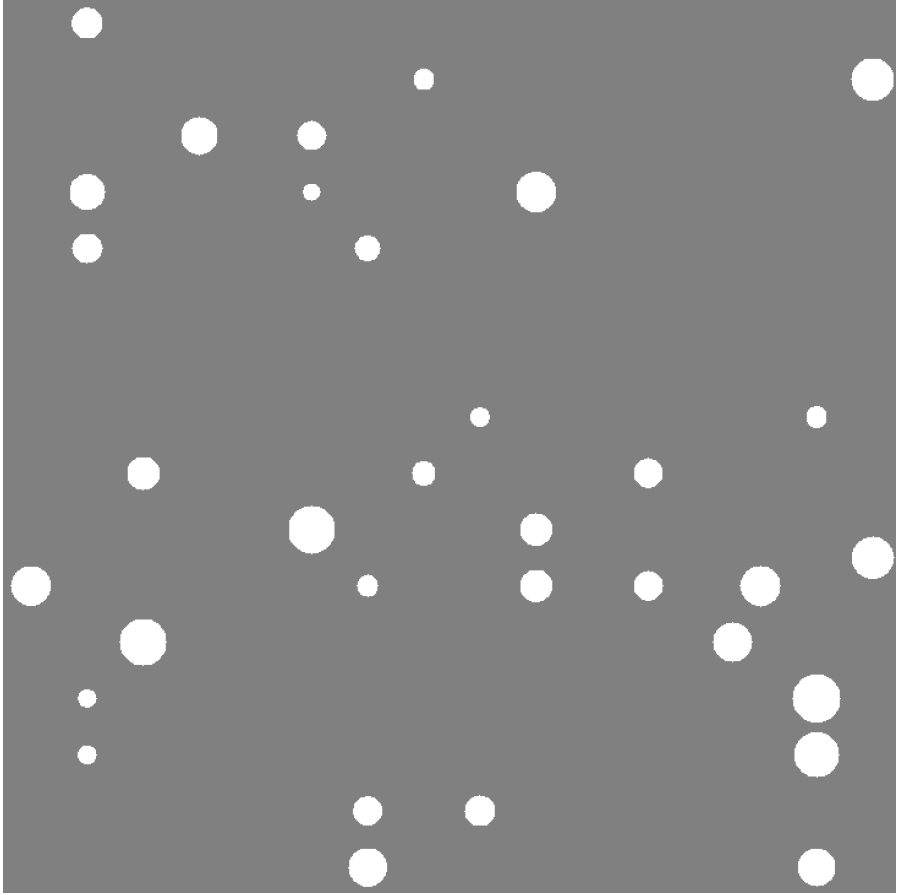}};
\draw[line width=1.5pt,step=0.7cm] (0,0) grid (7,7);
\draw[red,line width=2pt] (5.6,4.2) rectangle (6.3,4.9);
\draw[step=0.5cm,shift={(8.5,2)}] (0,0) grid (3,3);
\draw[line width=2pt,->] (6.4,4.5) -- (8,3.5);
\end{tikzpicture}
\caption{Illustration of the coarse grid and the fine-grid. 
For perforated regions, fine-grid cells that lie entirely inside the perforations are excluded from the computational domain, while the remaining cells adjacent to the perforation boundaries are treated with appropriate boundary conditions.}
\label{fig:perforateddomain_mesh}
\end{figure}

There are several strategies to construct the snapshot space, including using all fine-grid basis functions or solving localized problems with various boundary conditions. For simplicity, we adopt the approach that utilizes all fine-grid basis functions within each coarse block, i.e.,
$Q_{\textnormal{snap}}^i \coloneqq \mathrm{span}_{w \in K_i} \{ \varphi_w \}$.
Details of alternative constructions can be found in \cite{chen2020generalized}.

To reduce the dimension of $Q_{\textnormal{snap}}^i$, we solve the following spectral problem in each coarse block $K_i$:
\begin{equation}
a_i(\zeta_j^{i,\textnormal{off}}, q) = \lambda_j^i \, s_i(\zeta_j^{i,\textnormal{off}}, q), \quad \forall q \in Q^{\textnormal{snap}}_i,
\label{eq:spetral_problem_off}
\end{equation}
where the bilinear forms are defined as
\[
a_i(p, q) = \sum_{e \in \mathcal{E}_h^{0,i}} \bar{\kappa}_e |e|^2 \, \llbracket p \rrbracket_e \llbracket q \rrbracket_e, \quad
s_i(p, q) = \int_{K_i} \kappa \, p q.
\]
Here, $\mathcal{E}_h^{0,i}$ is the set of fine-grid interior edges within $K_i$. The eigenvalues are sorted in ascending order:
\[
0 = \lambda_1^i \le \lambda_2^i \le \cdots.
\]
The local offline space is spanned by the eigenfunctions corresponding to the first $l_i$ smallest eigenvalues:
\[
Q_{\textnormal{off}}^i = \mathrm{span}_j \{ \zeta_j^{i,\textnormal{off}} \}.
\]
Let $\Lambda = \min_{1 \leq i \leq N_c} \lambda_{l_i+1}^i$ denote the smallest eigenvalue not included in the offline space.

Unlike classical GMsFEM \cite{efendiev2013generalized}, our method does not rely on partition of unity functions to construct conforming basis functions. The global offline space is thus defined by a direct sum:
\[
Q_{\textnormal{ms}} = \bigoplus_i Q_{\textnormal{off}}^i = \mathrm{span}_{i,j} \{ \zeta_j^{i,\textnormal{off}} \}.
\]
The first eigenvalue equals zero, corresponding to a constant basis function, indicating that the pressure multiscale space enriches the coarse-grid $P_0$ space. Because it is a low-dimensional subspace of the fine-grid space, local mass conservation holds only at the coarse-grid level.

Let $R_0$ denote the matrix whose columns are the coefficients of the multiscale basis functions with respect to the fine-grid basis. Then,
\[
R_0 = \{ \zeta_1^{i,\textnormal{off}}, \zeta_2^{i,\textnormal{off}}, \ldots, \zeta_{l_i}^{i,\textnormal{off}} \}.
\]
Using the upscaling operator $R_0$ and its transpose $R_0^T$, the fine-grid operator $(B^T M^{-1} B)^{-1}$ is approximated by
\[
(B^T M^{-1} B)^{-1} \approx R_0 (R_0^T B^T M^{-1} B R_0)^{-1} R_0^T.
\]
This formulation reflects the core idea of multiscale methods: fine-scale operators are upscaled for efficient coarse-level computations (e.g., matrix inversion) and subsequently downscaled to yield approximate fine-scale operators with lower computational cost.

\begin{algorithm}[htbp]
\caption{Offline stage} 
\begin{algorithmic}[1]
\REQUIRE Permeability field $\kappa$, perforated domain $\Omega^{\epsilon}$.
\STATE Generate the coarse mesh $\mathcal{T}_H$ and fine mesh $\mathcal{T}_h$.
\FOR {$1 \le i \le N_c$}
\STATE Solve the local spectral problem~\eqref{eq:spetral_problem_off} in the snapshot space $Q_{\textnormal{snap}}^i$.
\STATE Sort the eigenvalues in ascending order, i.e., $\lambda_1 \le \lambda_2 \le \cdots$.
\STATE Select the first $l_i$ eigenfunctions to construct the local offline space $Q_{\textnormal{off}}^i$.
\ENDFOR
\STATE Assemble the global offline multiscale space $Q_{\textnormal{ms}}$.
\STATE Compute the multiscale pressure:
\[
p_{\textnormal{ms}} = R_0 (R_0^T B^T M^{-1} B R_0)^{-1} R_0^T F.
\]
\STATE Recover the multiscale velocity $\mathbf{u}_{\textnormal{ms}}$ using Equation~\eqref{eq:veq_darcylaw}.
\ENSURE Multiscale solution $(\mathbf{u}_{\textnormal{ms}}, p_{\textnormal{ms}})$.
\end{algorithmic}
\label{alg:mgms_offline}
\end{algorithm}

The steps of the offline stage are summarized in Algorithm~\ref{alg:mgms_offline}. An oversampling strategy may also be employed to mitigate the influence of artificial boundary conditions in local problems; see \cite{chen2020generalized} for details.

\section{Online basis functions} \label{sec:online_stage}

In this section, we describe the online stage of the multiscale method. During this stage, the solution is iteratively improved by enriching the multiscale space with additional basis functions constructed in selected coarse blocks where the residual is large. We present an adaptive enrichment strategy for efficiently constructing and adding these online basis functions.

A key component of the online stage is a residual-based indicator that guides the enrichment process. For a coarse block $K_i$, we define the local residual operator $r_i : Q(K_i) \rightarrow \mathbb{R}$ by
\begin{equation}
r_i(q) = \int_{K_i} f q - a_i(p_{\textnormal{ms}}, q).
\end{equation}

We also define a velocity-weighted $L^2$-norm on $Q_{\textnormal{snap}}^i$:
\begin{equation}
\|q_h\|_{V_i}^2 = \int_{K_i} \kappa^{-1} \mathbf{v} \cdot \mathbf{v} + \sum_{e \in \partial K_i} \bar{\kappa}_e |e|^2 \llbracket q_h \rrbracket_e^2 = \sum_{e \in \mathcal{E}_h^i} \bar{\kappa}_e |e|^2 \llbracket q_h \rrbracket_e^2,
\end{equation}
where $\mathbf{v}$ is the velocity field corresponding to $q$ via~\eqref{eq:veq_darcylaw}. 

The norm of the residual operator $r_i$ is then defined by
\begin{equation}
\delta_i \coloneqq \|r_i\|_{V_i} = \sup_{q_h \in Q_{\textnormal{snap}}^i} \frac{|r_i(q_h)|}{\|q_h\|_{V_i}}.
\label{eq:defi_delta_Ki}
\end{equation}
Indeed, the velocity-weighted $L^2$-norm of $r_i$ can be regarded as a discrete analogue of the $H^{-1}$ norm in the continuous setting, 
where $\|q_h\|_{V_i}$ corresponds to a discrete $H^1$ norm in the pressure space.

To construct the online basis function $\beta_i$ for a selected coarse block $K_i$, we solve the following local problem:
\begin{equation}
\begin{cases}
\begin{aligned}
\int_{K_i} \kappa^{-1} \boldsymbol{\gamma}_i \mathbf{v}_h
- \int_{K_i} \beta_i \mathrm{div}(\mathbf{v}_h) &= 0, 
&\forall \mathbf{v}_h \in \mathbf{V}_h(K_i), \\
\int_{K_i} \mathrm{div}(\boldsymbol{\gamma}_i) q_h &= 
\int_{K_i} (f - \mathrm{div}(\mathbf{u}_{\textnormal{ms}}^{n-1})) q_h, \quad &\forall q_h \in Q_h(K_i),
\end{aligned}
\end{cases}
\label{eq:online_zeta_beta_Ki}
\end{equation}
subject to homogeneous Neumann boundary conditions for $\boldsymbol{\gamma}_i$. Oversampling techniques can also be applied to improve the accuracy of $\beta_i$; see \cite{he2021adaptive}.

The local problem~\eqref{eq:online_zeta_beta_Ki} can be reformulated in variational form:
\begin{equation}
a_i(\beta_i, q_h) = r_i(q_h), \quad \forall q_h \in Q_h(K_i).
\label{eq:online_beta_Ki}
\end{equation}
It follows that $\beta_i$ is linearly independent of any function in the previous multiscale space $Q_{\textnormal{ms}}$, since $r_i(q) = 0$ for all $q \in Q_{\textnormal{ms}}$.

The full procedure for the online stage, including the adaptive enrichment algorithm, is given in Algorithm~\ref{alg:mgms_online}. Let $Q_{\textnormal{ms}}^0$ denote the initial multiscale space constructed in the offline stage, and let $(\mathbf{u}_{\textnormal{ms}}^0, p_{\textnormal{ms}}^0)$ denote the corresponding multiscale solution. The enrichment parameter $\theta$ controls the number of coarse blocks selected in each iteration: $\theta = 1$ corresponds to uniform enrichment, and $\theta = 0$ implies no online enrichment. The number of enrichment iterations is specified by \texttt{Iter}.

\begin{algorithm}[htbp]
\caption{Online stage}
\begin{algorithmic}[1]
\REQUIRE Initial multiscale space $Q_{\textnormal{ms}}^0$, initial solution $(\mathbf{u}_{\textnormal{ms}}^0, p_{\textnormal{ms}}^0)$, adaptivity parameter $\theta$, number of iterations \texttt{Iter}.
\FOR{$1 \le n \le \texttt{Iter}$}
    \FOR{$1 \le i \le N_c$}
        \STATE Compute local residual indicator $\delta_i$ using~\eqref{eq:defi_delta_Ki}.
    \ENDFOR
    \STATE Sort $\{\delta_i\}$ in descending order and find the smallest $k$ such that
    \begin{equation}
    \sum_{i=1}^{k} \delta_i^2 \ge \theta \sum_{i=1}^{N_c} \delta_i^2.
    \label{eq:theta_defi}
    \end{equation}
    \FOR{$1 \le i \le k$}
        \STATE Compute the online basis function $\beta_i$ by solving~\eqref{eq:online_zeta_beta_Ki}.
    \ENDFOR
    \STATE Enrich the multiscale space: $Q_{\textnormal{ms}}^n = Q_{\textnormal{ms}}^{n-1} \oplus \{\beta_i\}_{i=1}^k$.
    \STATE Update the multiscale operator $R_0$.
    \STATE Solve for pressure: 
    \[
    p_{\textnormal{ms}}^n = R_0 (R_0^T B^T M^{-1} B R_0)^{-1} R_0^T F.
    \]
\ENDFOR
\STATE Recover the velocity $\mathbf{u}_{\textnormal{ms}}^n$ using~\eqref{eq:veq_darcylaw}.
\ENSURE Multiscale solution $(\mathbf{u}_{\textnormal{ms}}^n, p_{\textnormal{ms}}^n)$.
\end{algorithmic}
\label{alg:mgms_online}
\end{algorithm}

\section{Analysis} \label{sec:convergence_results}

In this section, we provide a theoretical analysis of the convergence behavior for both the offline and online stages of the proposed multiscale method. Specifically, we show that the accuracy of the offline solution is influenced by the smallest eigenvalue $\Lambda$, corresponding to the first excluded eigenfunction from the multiscale space. For the online stage, we demonstrate that the convergence rate depends on $\Lambda$, the adaptive parameter $\theta$, and the number of enrichment iterations \texttt{Iter}.

We begin by introducing the weak formulation used in our multiscale approach. The key idea is to construct a reduced-order subspace $Q_{\textnormal{ms}} \subset Q_h$ and solve Equation~\eqref{eq:algbra_pressure} within this space. The multiscale version of the mixed formulation~\eqref{eq:pde_mixed} is given as:

\begin{equation}
\begin{cases}
\begin{aligned}
\int_{\Omega^{\epsilon}} \kappa^{-1}\mathbf{u}_{\textnormal{ms}} \cdot \mathbf{v}_{\textnormal{ms}}
- \int_{\Omega^{\epsilon}} \mathrm{div}(\mathbf{v}_{\textnormal{ms}}) p_{\textnormal{ms}} 
&= 0, 
& \forall \mathbf{v}_{\textnormal{ms}} \in \mathbf{V}_{\textnormal{ms}}, \\
\int_{\Omega^{\epsilon}} \mathrm{div}(\mathbf{v}_{\textnormal{ms}}) q_{\textnormal{ms}}
&= \int_{\Omega^{\epsilon}} f q_{\textnormal{ms}}, 
& \forall q_{\textnormal{ms}} \in Q_{\textnormal{ms}}.
\end{aligned}
\end{cases}
\label{eq:weak_rt_ms}
\end{equation}
Here, the multiscale velocity space $\mathbf{V}_{\textnormal{ms}}$ coincides with the fine-grid velocity space $\mathbf{V}_h$. We retain the subscript “ms” for clarity and consistency. Using the definition of $a(\cdot,\cdot)$ in \eqref{eq:algbra_pressure} and the velocity-pressure relationship in \eqref{eq:veq_darcylaw}, we have

\begin{equation}
a(p_h, q_h) =
\sum_{e \in \mathcal{E}_h^0} \bar{\kappa}_e |e|^2 \llbracket p_h \rrbracket_e \llbracket q_h \rrbracket_e
= \int_{\Omega^{\epsilon}} \kappa^{-1} \mathbf{u}_h \cdot \mathbf{v}_h.
\label{eq:kpq_uv}
\end{equation}
This relation indicates that the error in the pressure solution can be evaluated through the corresponding velocity error.

\subsection{Error analysis of the offline stage}
In this subsection, we analyze the convergence of the offline multiscale solution. 
The error estimate depends on the smallest eigenvalue $\Lambda$ whose associated eigenfunction is not included in the offline space, which reflects the spectral-decay principle commonly used in the analysis of GMsFEM. 
Our proof structure follows the spectral analysis framework developed in~\cite{chung2015mixed, chen2020generalized}. 
In particular, we employ a pressure-space spectral decomposition and develop the analysis for the mixed formulation of the Darcy problem in perforated domains.

Let $\bar{f}$ denote the $L^2$-projection of the source term $f$ onto the pressure multiscale space $Q_{\textnormal{ms}}$. Then, for any $q_{\textnormal{ms}} \in Q_{\textnormal{ms}}$, it holds that
\[
\int_{K_i} (f - \bar{f}) q_{\textnormal{ms}} = 0.
\]

We now define an auxiliary local solution $\left(\widehat{\mathbf{u}}, \widehat{p}\right)$ satisfying
\begin{equation}
\begin{cases}
\begin{aligned}
\kappa^{-1} \widehat{\mathbf{u}} + \nabla \widehat{p} &= 0, & \text{in } K_i, \\
\mathrm{div}(\widehat{\mathbf{u}}) &= \bar{f}, & \text{in } K_i,
\end{aligned}
\end{cases}
\label{eq:pde_mixed_K_L2f}
\end{equation}
with the following compatibility conditions:
\begin{equation}
\widehat{\mathbf{u}} \cdot \mathbf{n} = \mathbf{u}_h \cdot \mathbf{n} \quad \text{on } \partial K_i,
\qquad
\int_{K_i} \widehat{p} = \int_{K_i} p_h.
\label{eq:constrain_uphat}
\end{equation}
The solution $(\widehat{\mathbf{u}}, \widehat{p})$ is approximated in the finite element spaces $V_h(K_i) \times Q_h(K_i)$.

We next establish a bound on the discrepancy between the fine-grid solution and this auxiliary solution.

\begin{lemma} \label{lemma:uhu_kdiv}
Let $\mathbf{u}_h \in \mathbf{V}_h$ be the fine-grid solution of~\eqref{eq:weak_rt_fine}, and let $\widehat{\mathbf{u}} \in \mathbf{V}_h$ be the solution of~\eqref{eq:pde_mixed_K_L2f}. Then, we have
\begin{equation}
\int_{\Omega^{\epsilon}} \kappa^{-1} |\mathbf{u}_h - \widehat{\mathbf{u}}|^2 
\leq \kappa_{\min}^{-1} \int_{\Omega^{\epsilon}} |f - \bar{f}|^2.
\end{equation}
where $\kappa_{\min} = \min_{x \in \Omega^{\epsilon}} \kappa$.
\end{lemma}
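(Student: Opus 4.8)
The plan is to compare $\mathbf{u}_h$ and $\widehat{\mathbf{u}}$ coarse block by coarse block through an energy identity, and then to absorb the resulting right-hand side with a Poincar\'e-type inequality on each $K_i$. Set $\mathbf{w} := \mathbf{u}_h - \widehat{\mathbf{u}}$, interpreting~\eqref{eq:pde_mixed_K_L2f} in its RT0/trapezoidal mixed discretization on $K_i$ so that $\widehat{\mathbf{u}} \in \mathbf{V}_h$ as in the statement. The flux-matching condition in~\eqref{eq:constrain_uphat} then gives $\mathbf{w} \in \mathbf{V}_h$ with $\mathbf{w}\cdot\mathbf{n} = 0$ on every $\partial K_i$, i.e.\ $\mathbf{w}|_{K_i} \in \mathbf{V}_h(K_i)$, while subtracting the divergence equations of~\eqref{eq:weak_rt_fine} and~\eqref{eq:pde_mixed_K_L2f} gives $\mathrm{div}(\mathbf{w}) = f - \bar f$ in $Q_h(K_i)$ (reading $f$ as its $L^2$-projection onto $Q_h$ when $f \notin Q_h$; recall $\bar f \in Q_{\textnormal{ms}} \subset Q_h$). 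I would also record in passing that~\eqref{eq:pde_mixed_K_L2f} is solvable precisely because $\int_{K_i}\bar f = \int_{K_i} f$: the form $a_i$ annihilates constants, so $\lambda_1^i = 0$ in~\eqref{eq:spetral_problem_off} and constants lie in $Q_{\textnormal{off}}^i \subset Q_{\textnormal{ms}}$, whence the $L^2$-projection preserves the local mean of $f$.

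Next I would establish the energy identity. Since the trapezoidal mass form~\eqref{eq:kuv_trapezoidal} is assembled elementwise, the velocity--pressure equation of~\eqref{eq:weak_rt_fine} may be restricted to test functions supported in $K_i$; subtracting from it the first equation of the local problem~\eqref{eq:pde_mixed_K_L2f}, which holds for all $\mathbf{v}_h \in \mathbf{V}_h(K_i)$, and testing the difference with the admissible choice $\mathbf{v}_h = \mathbf{w}|_{K_i}$ yields
\[
\int_{K_i}\kappa^{-1}|\mathbf{w}|^2 \;=\; \int_{K_i}\mathrm{div}(\mathbf{w})\,(p_h - \widehat{p}) \;=\; \int_{K_i}(f - \bar f)\,(p_h - \widehat{p}),
\]
where the first equality rewrites the trapezoidal mass term via~\eqref{eq:kuv_trapezoidal}--\eqref{eq:kpq_uv} and the second uses $\mathrm{div}(\mathbf{w}) = f - \bar f$ together with $p_h - \widehat{p} \in Q_h(K_i)$; no boundary contribution survives because $\mathbf{w}$ has zero flux on $\partial K_i$.

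To close the estimate, write $\psi := p_h - \widehat{p}$, which has zero mean on $K_i$ by the second condition in~\eqref{eq:constrain_uphat}. The discrete Darcy law~\eqref{eq:veq_darcylaw} expresses $\mathbf{w}$ through the jumps of $\psi$, so $\int_{K_i}\kappa^{-1}|\mathbf{w}|^2$ is the discrete weighted Dirichlet energy of $\psi$ and is bounded below by $\kappa_{\min}$ times its unweighted discrete gradient energy; combining Cauchy--Schwarz with a discrete Poincar\'e inequality on the perforated block $K_i$ then gives
\[
\int_{K_i}\kappa^{-1}|\mathbf{w}|^2 \;\le\; \|f - \bar f\|_{L^2(K_i)}\,\|\psi\|_{L^2(K_i)} \;\le\; C_P\,\kappa_{\min}^{-1/2}\,\|f - \bar f\|_{L^2(K_i)}\Big(\int_{K_i}\kappa^{-1}|\mathbf{w}|^2\Big)^{1/2},
\]
hence $\int_{K_i}\kappa^{-1}|\mathbf{w}|^2 \le C_P^2\,\kappa_{\min}^{-1}\|f - \bar f\|_{L^2(K_i)}^2$, and summing over $i$ yields the claim provided $C_P \le 1$. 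This last point is where I expect the genuine difficulty to lie: the bound as stated tacitly absorbs a Poincar\'e constant (which carries the missing factor of length$^2$ — the inequality is dimensionally consistent only once the domain is normalized), and it requires that each coarse block, with its perforations removed, has Poincar\'e constant at most one. This is plausible under the usual scaling and shape-regularity assumptions on $\mathcal{T}_H$ and $\mathcal{B}^{\epsilon}$, but it is not automatic for arbitrary perforations. Keeping $C_P$ explicit — or, alternatively, invoking the spectral gap of~\eqref{eq:spetral_problem_off} to trade it for a $\Lambda^{-1}$-type factor, in the spirit of the offline error analysis that follows — would make the argument fully rigorous; everything else reduces to Cauchy--Schwarz and the RT0/trapezoidal identities already recorded in Section~\ref{sec:preliminaries}.
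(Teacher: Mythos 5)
Your argument is correct and shares its skeleton with the paper's proof: both restrict to a single coarse block, subtract the two mixed systems, test with $\mathbf{u}_h-\widehat{\mathbf{u}}$ and $p_h-\widehat{p}$ to obtain the energy identity $\int_{K_i}\kappa^{-1}|\mathbf{u}_h-\widehat{\mathbf{u}}|^2=\int_{K_i}(f-\bar f)(p_h-\widehat{p})$, and then close with Cauchy--Schwarz after bounding $\|p_h-\widehat{p}\|_{L^2(K_i)}$ by the weighted velocity energy. The one genuine divergence is the mechanism for that last bound: the paper invokes the discrete inf-sup condition~\eqref{eq:infsup_ineq} for the RT0/$P_0$ pair on $K_i$ (stated with constant one) and the first equation of~\eqref{eq:weak_diff_uhuL2_K}, whereas you use the identity~\eqref{eq:kpq_uv} to read $\int_{K_i}\kappa^{-1}|\mathbf{u}_h-\widehat{\mathbf{u}}|^2$ as the weighted jump energy of $\psi=p_h-\widehat p$ and then apply a discrete Poincar\'e inequality for the mean-zero $\psi$. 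These two routes are essentially dual and hide the same block-dependent constant: your $C_P$ is the reciprocal of the paper's (suppressed) inf-sup constant $\beta_i$, and your remark about the missing factor of $\mathrm{length}^2$ applies verbatim to the paper's version, where it is absorbed by normalizing~\eqref{eq:infsup_ineq} to constant one and by bounding the $H(\mathrm{div};K_i)$ norm below by the $L^2$ norm. So your candor about where the constant lives is a feature, not a gap --- the clean statement $\le\kappa_{\min}^{-1}\int|f-\bar f|^2$ should really carry a shape-dependent constant in either derivation. Your side observation that $\lambda_1^i=0$ forces constants into $Q_{\textnormal{off}}^i$, so that $\int_{K_i}\bar f=\int_{K_i}f$ and the local Neumann problem~\eqref{eq:pde_mixed_K_L2f} with the flux constraint~\eqref{eq:constrain_uphat} is actually solvable, is a point the paper passes over silently and is worth keeping; note only that it requires $l_i\ge 1$ on every block, which the method assumes implicitly.
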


\begin{proof}
Let $\mathbf{u}_h$ and $\widehat{\mathbf{u}}$ denote the solutions to problems~\eqref{eq:weak_rt_fine} and~\eqref{eq:pde_mixed_K_L2f}, respectively. Then, for each coarse block $K_i$, the following relations hold:
\begin{equation}
\begin{aligned}
\int_{K_i} \kappa^{-1}(\mathbf{u}_h - \widehat{\mathbf{u}}) \cdot \mathbf{v}_h
- \int_{K_i} (p_h - \widehat{p}) \, \mathrm{div}(\mathbf{v}_h) &= 0, 
&\quad \forall \mathbf{v}_h \in \mathbf{V}_h(K_i), \\
\int_{K_i} \mathrm{div}(\mathbf{u}_h - \widehat{\mathbf{u}}) \, q_h &= \int_{K_i} (f - \bar{f}) \, q_h, 
&\quad \forall q_h \in Q_h(K_i).
\end{aligned}
\label{eq:weak_diff_uhuL2_K}
\end{equation}
Choosing $\mathbf{v}_h = \mathbf{u}_h - \widehat{\mathbf{u}}$ and $q_h = p_h - \widehat{p}$ in~\eqref{eq:weak_diff_uhuL2_K}, we obtain
\begin{equation}
\int_{K_i} \kappa^{-1} |\mathbf{u}_h - \widehat{\mathbf{u}}|^2 = 
\int_{K_i} (f - \bar{f}) (p_h - \widehat{p}).
\label{eq:diff_uhu_ffL2_php}
\end{equation}
From the standard inf-sup condition for mixed elements~\cite{boffi2013mixed}, we have
\begin{equation}
\|q_h\|_{L^2(K_i)} \leq 
\sup_{\mathbf{v}_h \in \mathbf{V}_h(K_i)} 
\frac{\int_{K_i} q_h \, \mathrm{div}(\mathbf{v}_h)}{\|\mathbf{v}_h\|_{H(\mathrm{div};K_i)}}.
\label{eq:infsup_ineq}
\end{equation}
Applying~\eqref{eq:infsup_ineq} with $q_h = p_h - \widehat{p}$ and using the first equation in~\eqref{eq:weak_diff_uhuL2_K}, we derive
\begin{equation}
\|p_h - \widehat{p}\|_{L^2(K_i)}^2 \leq 
\kappa_{\min}^{-1} 
\int_{K_i} \kappa^{-1} |\mathbf{u}_h - \widehat{\mathbf{u}}|^2.
\label{eq:phphat_uhuhat}
\end{equation}
Substituting~\eqref{eq:phphat_uhuhat} into~\eqref{eq:diff_uhu_ffL2_php} and using the Cauchy–Schwarz inequality, we obtain
\begin{equation}
\int_{K_i} \kappa^{-1} |\mathbf{u}_h - \widehat{\mathbf{u}}|^2 \leq
\kappa_{\min}^{-1} \int_{K_i} |f - \bar{f}|^2.
\label{eq:diff_uhuhat_ffL2_Ki}
\end{equation}
Summing~\eqref{eq:diff_uhuhat_ffL2_Ki} over all coarse elements \(K_i \subset \Omega^{\epsilon}\) yields the desired estimate.
\end{proof}

Based on the above lemma, we now establish the convergence of the offline multiscale solution.

\begin{theorem} \label{thm:uhms_offline}
Let $\mathbf{u}_h$ be the fine-grid solution of~\eqref{eq:weak_rt_fine}, and $\mathbf{u}_{\textnormal{ms}}$ be the multiscale solution from~\eqref{eq:weak_rt_ms}. Then,
\begin{equation}
\int_{\Omega^{\epsilon}} \kappa^{-1} |\mathbf{u}_h - \mathbf{u}_{\textnormal{ms}}|^2 
\le \frac{\kappa_{\min}^{-1}}{\Lambda} \sum_{i=1}^{N_c} a_i(\widehat{p}, \widehat{p}) 
+ \kappa_{\min}^{-1} \sum_{i=1}^{N_c} \int_{K_i} |f - \bar{f}|^2,
\label{eq:uhms_phat_ffbar}
\end{equation}
where $\widehat{p}$ is the auxiliary pressure defined in~\eqref{eq:pde_mixed_K_L2f}.
\end{theorem}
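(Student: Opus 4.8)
The plan is to reduce the estimate to a best-approximation inequality in the energy norm induced by $a(\cdot,\cdot)$, and then to feed in two independent local ingredients: the spectral gap $\Lambda$, which controls the part of the blockwise auxiliary solution $\widehat p$ that the offline space fails to resolve, and Lemma~\ref{lemma:uhu_kdiv}, which absorbs the source-oscillation term. First I would establish Galerkin orthogonality: subtracting the pressure equation in~\eqref{eq:weak_rt_ms} from the one in~\eqref{eq:weak_rt_fine}, both tested against an arbitrary $q_{\textnormal{ms}}\in Q_{\textnormal{ms}}\subset Q_h$, and using~\eqref{eq:algbra_pressure}, gives $a(p_h-p_{\textnormal{ms}},q_{\textnormal{ms}})=0$. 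Since $a(\cdot,\cdot)$ is symmetric positive semidefinite with kernel the constants and, by~\eqref{eq:kpq_uv}, $a(q,q)=\int_{\Omega^{\epsilon}}\kappa^{-1}|\mathbf{u}(q)|^2$ for the velocity $\mathbf{u}(q)$ reconstructed from $q$ via~\eqref{eq:veq_darcylaw} (so $\mathbf{u}(p_h)=\mathbf{u}_h$, $\mathbf{u}(p_{\textnormal{ms}})=\mathbf{u}_{\textnormal{ms}}$), a Céa-type argument yields
\[
\int_{\Omega^{\epsilon}}\kappa^{-1}|\mathbf{u}_h-\mathbf{u}_{\textnormal{ms}}|^2
=a(p_h-p_{\textnormal{ms}},p_h-p_{\textnormal{ms}})
=\min_{q\in Q_{\textnormal{ms}}}a(p_h-q,p_h-q),
\]
so it is enough to exhibit one convenient competitor $\widetilde p\in Q_{\textnormal{ms}}$ and bound $a(p_h-\widetilde p,p_h-\widetilde p)$ by the right-hand side of~\eqref{eq:uhms_phat_ffbar}.

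The competitor comes from $\widehat p$ and the spectral basis. On each $K_i$ we have $\widehat p|_{K_i}\in Q_h(K_i)=Q_{\textnormal{snap}}^i$, so I expand it in the eigenfunctions of~\eqref{eq:spetral_problem_off}, normalized by $s_i(\zeta^{i,\textnormal{off}}_m,\zeta^{i,\textnormal{off}}_n)=\delta_{mn}$ and $a_i(\zeta^{i,\textnormal{off}}_m,\zeta^{i,\textnormal{off}}_n)=\lambda^i_m\delta_{mn}$, writing $\widehat p|_{K_i}=\sum_j c^i_j\,\zeta^{i,\textnormal{off}}_j$. Setting $\widetilde p:=\sum_i\sum_{j\le l_i}c^i_j\,\zeta^{i,\textnormal{off}}_j\in Q_{\textnormal{ms}}$, the unresolved tail on $K_i$ is $(\widehat p-\widetilde p)|_{K_i}=\sum_{j>l_i}c^i_j\,\zeta^{i,\textnormal{off}}_j$, and the eigenvalue ordering gives
\[
s_i(\widehat p-\widetilde p,\widehat p-\widetilde p)=\sum_{j>l_i}(c^i_j)^2
\le\frac{1}{\lambda^i_{l_i+1}}\sum_{j>l_i}\lambda^i_j(c^i_j)^2
\le\frac{1}{\Lambda}\,a_i(\widehat p,\widehat p).
\]
This is precisely where the factor $\Lambda^{-1}$ and the quantity $\sum_i a_i(\widehat p,\widehat p)$ in~\eqref{eq:uhms_phat_ffbar} originate.

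It then remains to estimate $a(p_h-\widetilde p,p_h-\widetilde p)=\sum_{e\in\mathcal{E}^0_h}\bar\kappa_e|e|^2\llbracket p_h-\widetilde p\rrbracket_e^2$, which I would do by writing $p_h-\widetilde p=(p_h-\widehat p)+(\widehat p-\widetilde p)$ and splitting the edge sum into edges interior to a single coarse block and edges lying on coarse-block interfaces. For the interior edges the contribution of $p_h-\widehat p$ is exactly $\sum_i a_i(p_h-\widehat p,p_h-\widehat p)$; because the compatibility condition $\widehat{\mathbf{u}}\cdot\mathbf{n}=\mathbf{u}_h\cdot\mathbf{n}$ on $\partial K_i$ from~\eqref{eq:constrain_uphat} forces $\mathbf{u}_h-\widehat{\mathbf{u}}$ to have zero normal trace on $\partial K_i$, this interior-edge contribution equals $\int_{K_i}\kappa^{-1}|\mathbf{u}_h-\widehat{\mathbf{u}}|^2$, which (by the blockwise estimate~\eqref{eq:diff_uhuhat_ffL2_Ki} in the proof of Lemma~\ref{lemma:uhu_kdiv}) is bounded by $\kappa_{\min}^{-1}\int_{K_i}|f-\bar f|^2$. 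For the tail $\widehat p-\widetilde p$ I would combine the $s_i$-bound of the previous step with the $V_i$-norm identity $\|q\|^2_{V_i}=a_i(q,q)+\sum_{e\in\partial K_i}\bar\kappa_e|e|^2\llbracket q\rrbracket_e^2$ and the inf-sup estimate~\eqref{eq:infsup_ineq}, and use the mean compatibility $\int_{K_i}\widehat p=\int_{K_i}p_h$ in~\eqref{eq:constrain_uphat} to align the blockwise pieces across interfaces; summing the interior and interface contributions then produces the two terms on the right of~\eqref{eq:uhms_phat_ffbar}.

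The hard part will be this last step. The spectral problem only delivers smallness of the tail in the $s_i$-weighted $L^2$ norm, whereas the energy error $a(\cdot,\cdot)$ measures jump energy; bridging that gap, and carrying out the bookkeeping on coarse-block interfaces—where $\widehat p$ is defined only blockwise and $Q_{\textnormal{ms}}=\bigoplus_i Q_{\textnormal{off}}^i$ is nonconforming—so that the interface jump contributions telescope or are reabsorbed into $\Lambda^{-1}\sum_i a_i(\widehat p,\widehat p)$ and $\kappa_{\min}^{-1}\sum_i\int_{K_i}|f-\bar f|^2$ rather than accumulating extra constants, is exactly where both conditions in~\eqref{eq:constrain_uphat} must be used with care.
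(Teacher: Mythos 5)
Your overall strategy---Galerkin orthogonality, a C\'ea-type best-approximation bound in the $a$-energy norm, and the blockwise spectral truncation of $\widehat p$ as the competitor $\widetilde p$ (which is exactly the paper's $\widehat p_{\textnormal{ms}}$)---is a legitimate alternative organization, and your tail bound is where the factor $1/\Lambda$ enters, playing the same role as the paper's~\eqref{eq:uhatmshms_phat}. The gap is in the step you yourself flag as ``the hard part'': you never actually bound $a(p_h-\widetilde p,p_h-\widetilde p)$, and the sketch you give would not close. The global form $a(\cdot,\cdot)$ sums over \emph{all} interior fine edges, including those lying on coarse-block interfaces, whereas every tool available is blockwise. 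Lemma~\ref{lemma:uhu_kdiv} controls $\int_{K_i}\kappa^{-1}|\mathbf{u}_h-\widehat{\mathbf{u}}|^2$, which (because $(\mathbf{u}_h-\widehat{\mathbf{u}})\cdot\mathbf{n}=0$ on $\partial K_i$) accounts only for the edges in $\mathcal{E}_h^{0,i}$, i.e.\ for $\sum_i a_i(p_h-\widehat p,p_h-\widehat p)$; it says nothing about $\llbracket p_h-\widehat p\rrbracket_e$ for $e\subset\partial K_i$, a quantity that is not even canonically defined since $\widehat p$ is determined block by block only up to the mean constraint in~\eqref{eq:constrain_uphat}, and the zero normal flux of $\mathbf{u}_h-\widehat{\mathbf{u}}$ on $\partial K_i$ does \emph{not} translate into a vanishing pressure jump there. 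Likewise the spectral problem controls the tail $\widehat p-\widetilde p$ only in the $s_i$-norm, and converting its interface-jump energy into $s_i$-norms (as in~\eqref{eq:q_hoff_Vi}) costs extra constants, as do the cross terms in expanding $a\bigl((p_h-\widehat p)+(\widehat p-\widetilde p),\cdot\bigr)$. Invoking ``the mean compatibility to align the blockwise pieces'' is a hope, not an argument, and I do not see how it yields the right-hand side of~\eqref{eq:uhms_phat_ffbar} without additional interface terms or constants.

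The paper's proof sidesteps all of this by never forming the global energy norm of a blockwise-defined pressure. It writes the error equation~\eqref{eq:weak_diff_uhatums} for $\widehat{\mathbf{u}}-\mathbf{u}_{\textnormal{ms}}$, tests its first equation with $\mathbf{v}_{\textnormal{ms}}=\mathbf{u}_h-\mathbf{u}_{\textnormal{ms}}$ (admissible since $\mathbf{V}_{\textnormal{ms}}=\mathbf{V}_h$), uses the orthogonality of $\mathrm{div}(\mathbf{u}_h-\mathbf{u}_{\textnormal{ms}})$ to $Q_{\textnormal{ms}}$ to replace $p_{\textnormal{ms}}$ by $\widehat p_{\textnormal{ms}}$, and arrives at
\[
\int_{\Omega^{\epsilon}}\kappa^{-1}|\mathbf{u}_h-\mathbf{u}_{\textnormal{ms}}|^2
=\int_{\Omega^{\epsilon}}\kappa^{-1}(\mathbf{u}_h-\widehat{\mathbf{u}})\cdot(\mathbf{u}_h-\mathbf{u}_{\textnormal{ms}})
+\int_{\Omega^{\epsilon}}\kappa^{-1}(\widehat{\mathbf{u}}-\widehat{\mathbf{u}}_{\textnormal{ms}})\cdot(\mathbf{u}_h-\mathbf{u}_{\textnormal{ms}}),
\]
after which Cauchy--Schwarz reduces everything to the two purely local estimates (Lemma~\ref{lemma:uhu_kdiv} and~\eqref{eq:uhatmshms_phat}); every pairing decomposes over coarse blocks and no interface jump ever appears. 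To salvage your route you would need either to reconstruct from $\widetilde p$ a genuinely global velocity and track the interface fluxes explicitly, or to switch, as the paper does, to this velocity-side duality argument.
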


\begin{proof}
Let $\widehat{\mathbf{u}}$ and $\mathbf{u}_{\textnormal{ms}}$ be the solutions of problems~\eqref{eq:pde_mixed_K_L2f} and~\eqref{eq:weak_rt_ms}, respectively. We have

\begin{equation}
\begin{aligned}
\int_{\Omega^{\epsilon}} \kappa^{-1}(\widehat{\mathbf{u}} - \mathbf{u}_{\textnormal{ms}}) \cdot \mathbf{v}_{\textnormal{ms}} 
- \int_{\Omega^{\epsilon}} (\widehat{p}-p_{\textnormal{ms}}) \, \mathrm{div}(\mathbf{v}_{\textnormal{ms}}) &= 0, 
& \forall \mathbf{v}_{\textnormal{ms}} \in \mathbf{V}_{\textnormal{ms}}, \\
\int_{\Omega^{\epsilon}} q_{\textnormal{ms}} \, \mathrm{div}(\widehat{\mathbf{u}} - \mathbf{u}_{\textnormal{ms}}) &= 0, 
& \forall q_{\textnormal{ms}} \in Q_{\textnormal{ms}},
\end{aligned}
\label{eq:weak_diff_uhatums}
\end{equation}
where the second equation follows from the definition of $\bar{f}$.

Since $\mathbf{V}_{\textnormal{ms}} = \mathbf{V}_h$, we can take $\mathbf{v}_{\textnormal{ms}} = \mathbf{u}_h - \mathbf{u}_{\textnormal{ms}}$ in the first equation of~\eqref{eq:weak_diff_uhatums} to obtain
\begin{equation}
\int_{\Omega^{\epsilon}} \kappa^{-1} (\widehat{\mathbf{u}} - \mathbf{u}_{\textnormal{ms}}) \cdot (\mathbf{u}_h - \mathbf{u}_{\textnormal{ms}})
= \int_{\Omega^{\epsilon}} (\widehat{p} - p_{\textnormal{ms}}) \mathrm{div}(\mathbf{u}_h - \mathbf{u}_{\textnormal{ms}}) 
= \int_{\Omega^{\epsilon}} (\widehat{p} - \widehat{p}_{\textnormal{ms}}) \mathrm{div}(\mathbf{u}_h - \mathbf{u}_{\textnormal{ms}}),
\label{eq:hatphpms_uhums}
\end{equation}
where $\widehat{p}_{\textnormal{ms}}$ is the $Q_{\textnormal{ms}}$-projection of $\widehat{p}$.

Let $\{ \zeta_1^{i,\textnormal{off}}, \zeta_2^{i,\textnormal{off}}, \cdots \}$ be the eigenfunctions of~\eqref{eq:spetral_problem_off} on $K_i$, and suppose the first $l_i$ are selected in the offline space. We write
\[
\widehat{p} = \sum_{j} c_j^i \zeta_j^{i,\textnormal{off}}, \quad
\widehat{p}_{\textnormal{ms}} = \sum_{j=1}^{l_i} c_j^i \zeta_j^{i,\textnormal{off}}.
\]
Using~\eqref{eq:veq_darcylaw}, we have
\begin{equation}
\int_{K_i} |\widehat{p} - \widehat{p}_{\textnormal{ms}}|^2
\le \kappa_{\textnormal{min}}^{-1} s_i(\widehat{p} - \widehat{p}_{\textnormal{ms}}, \widehat{p} - \widehat{p}_{\textnormal{ms}})
\le \frac{1}{\lambda_{l_i+1}^i} \, a_i(\widehat{p}, \widehat{p}).
\label{eq:uhatmshms_phat}
\end{equation}
The proof is completed by combining~\eqref{eq:hatphpms_uhums}, ~\eqref{eq:uhatmshms_phat}, and Lemma~\ref{lemma:uhu_kdiv}.
\end{proof}

\paragraph{Remark.} 
The error estimate in Theorem~\ref{thm:uhms_offline} consists of two contributions: 
the spectral error from excluding higher eigenmodes (involving $\Lambda$), and 
the projection error of the source term $f$ onto the multiscale space. 
The first term decreases as more eigenfunctions are included, while the second vanishes when $f$ is well-represented in the offline space.
Together, they characterize the approximation power of the offline multiscale space.

\subsection{Error analysis of the online stage}

We now analyze the convergence behavior of the online stage. First, we show that the error of each multiscale approximation can be bounded in terms of the residual indicators $\delta_i$ and the spectral parameter $\Lambda$. We then prove a decay estimate for the online iterations based on the adaptive parameter $\theta$ and the number of enrichment steps.

\begin{lemma} \label{lem:u_hms_ri}
Let $(\mathbf{u}_h, p_h)$ be the fine-grid solution of~\eqref{eq:weak_rt_fine}, and $(\mathbf{u}_{\textnormal{ms}}, p_{\textnormal{ms}})$ be the multiscale solution of~\eqref{eq:weak_rt_ms}. Then,
\begin{equation}
\int_{\Omega^{\epsilon}} \kappa^{-1} 
|\mathbf{u}_h - \mathbf{u}_{\textnormal{ms}}|^2 
\le \frac{2}{\Lambda} \sum_{i=1}^{N_c} \delta_i^2.
\end{equation}
\end{lemma}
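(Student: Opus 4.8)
The plan is to bound the energy error $\int_{\Omega^\epsilon}\kappa^{-1}|\mathbf{u}_h-\mathbf{u}_{\textnormal{ms}}|^2 = a(p_h-p_{\textnormal{ms}}, p_h-p_{\textnormal{ms}})$ by the residual and the spectral gap $\Lambda$. The starting point is Galerkin orthogonality: since $p_{\textnormal{ms}}$ solves the multiscale problem, $a(p_h-p_{\textnormal{ms}}, q_{\textnormal{ms}})=0$ for all $q_{\textnormal{ms}}\in Q_{\textnormal{ms}}$, so for any such $q_{\textnormal{ms}}$,
$$a(p_h-p_{\textnormal{ms}}, p_h-p_{\textnormal{ms}}) = a(p_h-p_{\textnormal{ms}}, p_h-q_{\textnormal{ms}}).$$
I want to rewrite the left side using the residual. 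Note $a(p_h-p_{\textnormal{ms}}, q) = a(p_h,q)-a(p_{\textnormal{ms}},q) = \int_{\Omega^\epsilon} fq - a(p_{\textnormal{ms}},q)$ for any $q\in Q_h$, and this global residual decomposes over coarse blocks as $\sum_i r_i(q|_{K_i})$, where $r_i$ is the local residual operator from Section~\ref{sec:online_stage}. Hence the error equals $\sum_i r_i((p_h-p_{\textnormal{ms}})|_{K_i})$.

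**Next I would** control each local term $r_i((p_h-p_{\textnormal{ms}})|_{K_i})$ by $\delta_i$ times the $V_i$-norm of $(p_h-p_{\textnormal{ms}})|_{K_i}$, using the definition $\delta_i=\|r_i\|_{V_i}$ from \eqref{eq:defi_delta_Ki}. This gives
$$a(p_h-p_{\textnormal{ms}}, p_h-p_{\textnormal{ms}}) \le \sum_{i=1}^{N_c}\delta_i\,\|(p_h-p_{\textnormal{ms}})|_{K_i}\|_{V_i}.$$
The key reduction is that $\|(p_h-p_{\textnormal{ms}})|_{K_i}\|_{V_i}^2 = a_i(p_h-p_{\textnormal{ms}}, p_h-p_{\textnormal{ms}})$ by the identity $\|q_h\|_{V_i}^2=\sum_{e\in\mathcal E_h^i}\bar\kappa_e|e|^2\llbracket q_h\rrbracket_e^2$ together with \eqref{eq:kpq_uv}; summing over $i$ recovers the global energy norm (each interior fine edge of $\mathcal T_h$ is counted, with edges on coarse interfaces contributing from both adjacent blocks). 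A Cauchy–Schwarz step in the index $i$ then yields
$$a(p_h-p_{\textnormal{ms}}, p_h-p_{\textnormal{ms}}) \le \Big(\sum_i\delta_i^2\Big)^{1/2}\Big(\sum_i a_i(p_h-p_{\textnormal{ms}},p_h-p_{\textnormal{ms}})\Big)^{1/2}.$$

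**The main obstacle**—and the place where $\Lambda$ enters—is relating $\sum_i a_i(p_h-p_{\textnormal{ms}}, p_h-p_{\textnormal{ms}})$ back to something bounded by the global energy error times $\Lambda^{-1}$, so that the energy error can be divided out. This is where I expect to need the spectral problem \eqref{eq:spetral_problem_off}: the function $p_h-p_{\textnormal{ms}}$ restricted to $K_i$ lies in the snapshot space, and because $p_{\textnormal{ms}}$ contains the first $l_i$ eigenmodes, the component of $p_h-p_{\textnormal{ms}}$ that is "seen" by the residual is spectrally orthogonal to those modes—so one gets $s_i(p_h-p_{\textnormal{ms}}, p_h-p_{\textnormal{ms}}) \le \Lambda^{-1} a_i(p_h-p_{\textnormal{ms}}, p_h-p_{\textnormal{ms}})$, or rather the reverse-direction bound needed here. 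More precisely, I anticipate introducing an intermediate quantity (e.g.\ the projection of $p_h-p_{\textnormal{ms}}$ onto the orthogonal complement of $Q_{\textnormal{off}}^i$, analogous to the $\widehat p-\widehat p_{\textnormal{ms}}$ argument in Theorem~\ref{thm:uhms_offline}) and using $a_i(\cdot,\cdot) \le \lambda$-weighted $s_i(\cdot,\cdot)$ bounds, eventually producing the factor $2/\Lambda$ after absorbing one power of the energy error. The factor $2$ presumably comes from splitting $\|q_h\|_{V_i}^2$ into the interior part $\int_{K_i}\kappa^{-1}\mathbf v\cdot\mathbf v$ and the boundary-jump part $\sum_{e\in\partial K_i}\bar\kappa_e|e|^2\llbracket q_h\rrbracket_e^2$, bounding each separately, or from a triangle-inequality split when passing between the $V_i$-norm and the $a_i$-seminorm on functions with nonzero boundary flux. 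Balancing these constants carefully so exactly $2/\Lambda$ appears is the delicate bookkeeping I would need to get right.
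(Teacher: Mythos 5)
Your setup is the right one and matches the paper's: the energy error equals $\sum_i r_i(\cdot)$ by Galerkin orthogonality, each local term is bounded by $\delta_i$ times a $V_i$-norm, and a Cauchy--Schwarz in $i$ plus division by one power of the energy norm finishes. But the step you flag as ``the main obstacle'' is precisely the content of the lemma, and your proposal does not carry it out; it only speculates. The mechanism in the paper is the following chain, applied to a \emph{general} test function $q_h$ before specializing. Since $r_i$ annihilates $Q_{\textnormal{ms}}$, you may replace $q_h$ by $q_h-\widehat q_h$ inside $r_i$, where $\widehat q_h=\sum_{j\le l_i}c_j^i\zeta_j^{i,\textnormal{off}}$ is the truncation of the eigenfunction expansion of $q_h$. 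Then (i) $\|q_h-\widehat q_h\|_{V_i}^2\le 2\sum_{w\in K_i}\kappa_w|q_h-\widehat q_h|^2=2\,s_i(q_h-\widehat q_h,q_h-\widehat q_h)$ --- this is where the factor $2$ comes from (a $(a-b)^2\le 2a^2+2b^2$ bound on the edge jumps), not from splitting the $V_i$-norm into interior and boundary parts as you guessed; and (ii) by $s_i$-orthonormality of the eigenfunctions, $s_i(q_h-\widehat q_h,q_h-\widehat q_h)=\sum_{j>l_i}(c_j^i)^2\le(\lambda_{l_i+1}^i)^{-1}\sum_{j>l_i}\lambda_j^i(c_j^i)^2\le\Lambda^{-1}a_i(q_h,q_h)\le\Lambda^{-1}\int_{K_i}\kappa^{-1}|\mathbf v_h|^2$. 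Combining, $\int_{\Omega^\epsilon}\kappa^{-1}(\mathbf u_h-\mathbf u_{\textnormal{ms}})\cdot\mathbf v_h\le(2/\Lambda)^{1/2}\bigl(\sum_i\delta_i^2\bigr)^{1/2}\bigl(\int_{\Omega^\epsilon}\kappa^{-1}|\mathbf v_h|^2\bigr)^{1/2}$; taking $\mathbf v_h=\mathbf u_h-\mathbf u_{\textnormal{ms}}$ and squaring gives the claim. Without inserting the projection $\widehat q_h$, the spectral gap never enters and the best you can get from your second display is a bound of the form $C\sum_i\delta_i^2$ with no $\Lambda^{-1}$.

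There is also a smaller inaccuracy: $\|(p_h-p_{\textnormal{ms}})|_{K_i}\|_{V_i}^2$ is \emph{not} equal to $a_i(p_h-p_{\textnormal{ms}},p_h-p_{\textnormal{ms}})$, because the $V_i$-norm sums over all fine edges touching $K_i$ including those on $\partial K_i$, while $a_i$ runs only over the interior edges $\mathcal E_h^{0,i}$; summing over $i$ double-counts the coarse-interface edges. This is harmless for a crude bound but it is another reason the route of estimating $\|p_h-p_{\textnormal{ms}}\|_{V_i}$ directly does not produce the stated constant. So: right skeleton, but the decisive step --- replace the test function by its spectrally orthogonal remainder, pass from the $V_i$-norm to $2s_i$, and then to $\Lambda^{-1}a_i$ --- is missing rather than merely under-specified.
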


\begin{proof}
Let $q_h$ be an arbitrary function in $Q_h$, and let $\mathbf{v}_h$ denote the corresponding velocity defined via~\eqref{eq:veq_darcylaw}. 
Using~\eqref{eq:kpq_uv}, we obtain
\begin{equation}
\begin{aligned}
\int_{\Omega^{\epsilon}} \kappa^{-1} (\mathbf{u}_h - \mathbf{u}_{\textnormal{ms}}) \cdot \mathbf{v}_h 
&= a(p_h - p_{\textnormal{ms}}, q_h) 
= \int_{\Omega^{\epsilon}} f q_h - a(p_{\textnormal{ms}}, q_h) \\
&= \int_{\Omega^{\epsilon}} f (q_h - \widehat{q}_h) 
- a(p_{\textnormal{ms}}, q_h - \widehat{q}_h) \\
&= \sum_{i=1}^{N_c} \int_{K_i} (q_h - \widehat{q}_h) \big(f - \mathrm{div}(\mathbf{u}_{\textnormal{ms}})\big),
\end{aligned}
\label{eq:u_hms_vh_qhoff}
\end{equation}
where $\widehat{q}_h$ is the projection of $q_h$ onto the multiscale pressure space $Q_{\textnormal{ms}}$, and the penultimate equality follows from the Galerkin orthogonality: $\int_{\Omega^{\epsilon}} f \widehat{q}_h = a(p_{\textnormal{ms}}, \widehat{q}_h)$.

By the definition of the local residual operator $r_i$, we obtain
\[
\int_{\Omega^{\epsilon}} \kappa^{-1} (\mathbf{u}_h - \mathbf{u}_{\textnormal{ms}}) \cdot \mathbf{v}_h 
= \sum_{i=1}^{N_c} r_i(q_h - \widehat{q}_h) 
\le \sum_{i=1}^{N_c} \Vert r_i \Vert_{V_i} \, \Vert q_h - \widehat{q}_h \Vert_{V_i}.
\]
To estimate $\Vert q_h - \widehat{q}_h \Vert_{V_i}$, we recall the definition of the local norm:
\begin{equation}
\begin{aligned}
\Vert q_h - \widehat{q}_h \Vert_{V_i}^2 &= 
\int_{K_i} \kappa^{-1} |\mathbf{v}_h - \widehat{\mathbf{v}}_h|^2
+ \sum_{e \in \partial K_i} \bar{\kappa}_e |e|^2
\llbracket q_h - \widehat{q}_h \rrbracket_e^2 \\
&= \sum_{e \in \mathcal{E}_h^0(K_i)} \bar{\kappa}_e |e|^2
\llbracket q_h - \widehat{q}_h \rrbracket_e^2 +
\sum_{e \in \partial K_i} \bar{\kappa}_e |e|^2
\llbracket q_h - \widehat{q}_h \rrbracket_e^2 \\
&\le 2 \sum_{w \in K_i} \kappa_w |q_h - \widehat{q}_h|^2,
\end{aligned}
\label{eq:q_hoff_Vi}
\end{equation}
where $\widehat{\mathbf{v}}_h$ is defined via~\eqref{eq:veq_darcylaw} using $\widehat{p}_h$.

Let $\{ \zeta_1^{i,\textnormal{off}}, \zeta_2^{i,\textnormal{off}}, \ldots \}$ be the eigenfunctions of~\eqref{eq:spetral_problem_off} on $K_i$, and assume that the first $l_i$ eigenfunctions are used in the offline space.  
Suppose $q_h = \sum_j c_j^i \zeta_j^{i,\textnormal{off}}$, and its projection $\widehat{q}_h = \sum_{j=1}^{l_i} c_j^i \zeta_j^{i,\textnormal{off}}$. Then,
\begin{equation}
\begin{aligned}
s_i(q_h - \widehat{q}_h, q_h - \widehat{q}_h) 
&= \sum_{j \ge l_i+1} (c_j^i)^2
\le \frac{1}{\lambda_{l_i+1}^i} \sum_{j \ge l_i+1} \lambda_j^i (c_j^i)^2 \\
&\le \frac{1}{\lambda_{l_i+1}^i} a_i(q_h, q_h)
\le \frac{1}{\lambda_{l_i+1}^i} \int_{K_i} \kappa^{-1} \mathbf{v}_h^2.
\end{aligned}
\label{eq:si_qhoff_vh}
\end{equation}
Finally, combining~\eqref{eq:u_hms_vh_qhoff},~\eqref{eq:q_hoff_Vi}, and~\eqref{eq:si_qhoff_vh}, and taking $\mathbf{v}_h = \mathbf{u}_h - \mathbf{u}_{\textnormal{ms}}$, we complete the proof.
\end{proof}

Now, we can present the convergence analysis of the online adaptive enrichment algorithm.

\begin{theorem} \label{thm:uhms_online}
Let $\mathbf{u}_h$ be the fine-grid solution of~\eqref{eq:weak_rt_fine}, and $\mathbf{u}_{\textnormal{ms}}^n$ be the multiscale solution after $n$ online enrichment iterations. Then,
\begin{equation}
\int_{\Omega^{\epsilon}} \kappa^{-1} |\mathbf{u}_h - \mathbf{u}_{\textnormal{ms}}^n|^2 
\le \left(1 - \frac{\theta \Lambda}{2}\right)^n \int_{\Omega^{\epsilon}} \kappa^{-1} |\mathbf{u}_h - \mathbf{u}_{\textnormal{ms}}^0|^2.
\label{ineq:uhumsn_online}
\end{equation}
\end{theorem}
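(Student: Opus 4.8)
The plan is to prove the geometric-decay estimate \eqref{ineq:uhumsn_online} by showing that a single online iteration reduces the (squared, $\kappa^{-1}$-weighted) velocity error by at least a factor $\left(1 - \tfrac{\theta\Lambda}{2}\right)$, and then iterating. So it suffices to establish the one-step bound
\[
\int_{\Omega^{\epsilon}} \kappa^{-1}|\mathbf{u}_h - \mathbf{u}_{\textnormal{ms}}^{n}|^2
\le \left(1 - \frac{\theta\Lambda}{2}\right)
\int_{\Omega^{\epsilon}} \kappa^{-1}|\mathbf{u}_h - \mathbf{u}_{\textnormal{ms}}^{n-1}|^2 .
\]
I would first record the Galerkin-orthogonality / best-approximation property of the multiscale solution: since $\mathbf{V}_{\textnormal{ms}} = \mathbf{V}_h$ and the pressure space is enlarged at each step, the error $\int_{\Omega^{\epsilon}} \kappa^{-1}|\mathbf{u}_h - \mathbf{u}_{\textnormal{ms}}^{n}|^2$ equals the minimum of $\int_{\Omega^{\epsilon}} \kappa^{-1}|\mathbf{u}_h - \mathbf{v}|^2$ over velocities $\mathbf{v}$ arising (via \eqref{eq:veq_darcylaw}) from $Q_{\textnormal{ms}}^{n}$; in particular the error is monotone decreasing in $n$, and we are free to test with any convenient element of the enriched space when bounding $\int_{\Omega^{\epsilon}} \kappa^{-1}|\mathbf{u}_h - \mathbf{u}_{\textnormal{ms}}^{n}|^2$ from above.

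The heart of the argument is the choice of that test element. At step $n$ the space $Q_{\textnormal{ms}}^{n}$ contains $Q_{\textnormal{ms}}^{n-1}$ together with the online basis functions $\{\beta_i\}_{i=1}^k$ built from the residuals $r_i$ at the previous iterate; by \eqref{eq:online_beta_Ki}, $a_i(\beta_i,q) = r_i(q)$, and hence $a_i(\beta_i,\beta_i) = \delta_i^2$ (taking $q=\beta_i$ and using that $\beta_i$ attains the supremum defining $\|r_i\|_{V_i}$, since on $Q_{\textnormal{snap}}^i$ the $V_i$-norm agrees with $a_i(\cdot,\cdot)^{1/2}$). I would take as the correction the function $p_{\textnormal{ms}}^{n-1} + \sum_{i=1}^k t\,\beta_i$ for a scalar $t$ to be optimized (the $\beta_i$ have disjoint supports in the relevant coarse blocks, so cross terms vanish), compute the resulting error using the bilinear-form identity \eqref{eq:kpq_uv}, and obtain something of the form
\[
\int_{\Omega^{\epsilon}} \kappa^{-1}|\mathbf{u}_h - \mathbf{u}_{\textnormal{ms}}^{n}|^2
\le \int_{\Omega^{\epsilon}} \kappa^{-1}|\mathbf{u}_h - \mathbf{u}_{\textnormal{ms}}^{n-1}|^2
- 2t\sum_{i=1}^k \delta_i^2 + t^2 \sum_{i=1}^k \delta_i^2 .
\]
Optimizing in $t$ (i.e. $t=1$) gives a reduction of $\sum_{i=1}^k \delta_i^2$. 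By the Dörfler-type marking \eqref{eq:theta_defi}, $\sum_{i=1}^k \delta_i^2 \ge \theta \sum_{i=1}^{N_c}\delta_i^2$, and by Lemma~\ref{lem:u_hms_ri} applied to the iterate $(\mathbf{u}_{\textnormal{ms}}^{n-1}, p_{\textnormal{ms}}^{n-1})$ we have $\sum_{i=1}^{N_c}\delta_i^2 \ge \tfrac{\Lambda}{2}\int_{\Omega^{\epsilon}} \kappa^{-1}|\mathbf{u}_h - \mathbf{u}_{\textnormal{ms}}^{n-1}|^2$. Chaining these yields the one-step contraction factor $\left(1-\tfrac{\theta\Lambda}{2}\right)$, and induction on $n$ finishes the proof.

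The main obstacle I anticipate is bookkeeping the relation between the residual norm $\delta_i$, the online basis function $\beta_i$, and the change in the global error when the correction is added: one must verify carefully that the cross term $\int_{\Omega^{\epsilon}}\kappa^{-1}(\mathbf{u}_h-\mathbf{u}_{\textnormal{ms}}^{n-1})\cdot \boldsymbol{\gamma}_i$ (with $\boldsymbol{\gamma}_i$ the velocity associated to $\beta_i$) equals exactly $r_i(\beta_i)=\delta_i^2$, using Galerkin orthogonality against $Q_{\textnormal{ms}}^{n-1}$ and the local problem \eqref{eq:online_zeta_beta_Ki}, and that the $\beta_i$ for distinct marked blocks do not interact. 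A secondary subtlety is ensuring the factor $1-\tfrac{\theta\Lambda}{2}$ is genuinely in $[0,1)$ — this requires $\theta\Lambda \le 2$, which follows from the bound $\delta_i^2 \le \tfrac{1}{2}\,a_i$-type quantities already implicit in Lemma~\ref{lem:u_hms_ri}; I would note this consistency explicitly. Once those identities are in hand, the optimization in $t$ and the induction are routine.
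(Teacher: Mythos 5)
Your proposal is correct and follows essentially the same route as the paper: best approximation in the enriched pressure space, testing with $p_{\textnormal{ms}}^{n-1}+c\,\beta_i$ and optimizing the scalar to extract a reduction of $\|r_i\|_{V_i}^2$ via \eqref{eq:online_beta_Ki}, then invoking Lemma~\ref{lem:u_hms_ri} together with the marking criterion \eqref{eq:theta_defi} and inducting on $n$. The only cosmetic difference is that you add all $k$ marked corrections at once with a common parameter $t$ (flagging the cross-term issue), whereas the paper derives the one-basis-function reduction and then asserts the generalization to $k$ blocks.
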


\begin{proof}
We begin by establishing the convergence result for a single enrichment step, from which the full estimate in~\eqref{ineq:uhumsn_online} can be obtained by induction.

Let $(\mathbf{u}_{\textnormal{ms}}^{n-1}, p_{\textnormal{ms}}^{n-1})$ be the multiscale solution at the $(n-1)$-th iteration, and let $Q_{\textnormal{ms}}^{n-1}$ denote the corresponding pressure multiscale space. Assume that we add one online basis function $\beta_i$ constructed by~\eqref{eq:online_zeta_beta_Ki}, and define the enriched space by $Q_{\textnormal{ms}}^n = Q_{\textnormal{ms}}^{n-1} \oplus \{\beta_i\}$.
The new multiscale solution $(\mathbf{u}_{\textnormal{ms}}^n, p_{\textnormal{ms}}^n)$ satisfies
\[
\int_{\Omega^{\epsilon}} \kappa^{-1}
|\mathbf{u}_h - \mathbf{u}_{\textnormal{ms}}^n|^2
= a(p_h - p_{\textnormal{ms}}^n, p_h - p_{\textnormal{ms}}^n)
= \inf_{q \in Q_{\textnormal{ms}}^n} a(p_h - q, p_h - q).
\]
Taking a test function of the form $q = p_{\textnormal{ms}}^{n-1} + c \beta_i$, we obtain
\begin{align*}
\int_{\Omega^{\epsilon}} \kappa^{-1} |\mathbf{u}_h - \mathbf{u}_{\textnormal{ms}}^n|^2
&\le a(p_h - p_{\textnormal{ms}}^{n-1} - c \beta_i, p_h - p_{\textnormal{ms}}^{n-1} - c \beta_i) \\
&= a(p_h - p_{\textnormal{ms}}^{n-1}, p_h - p_{\textnormal{ms}}^{n-1}) 
- 2c\, a(p_h - p_{\textnormal{ms}}^{n-1}, \beta_i) + c^2 a(\beta_i, \beta_i).
\end{align*}
The optimal choice of $c$ that minimizes the right-hand side is given by $c = a(p_h - p_{\textnormal{ms}}^{n-1}, \beta_i) / a(\beta_i, \beta_i)$. Substituting this into the expression yields
\begin{align*}
\int_{\Omega^{\epsilon}} \kappa^{-1} |\mathbf{u}_h - \mathbf{u}_{\textnormal{ms}}^n|^2
&\le \int_{\Omega^{\epsilon}} \kappa^{-1} |\mathbf{u}_h - \mathbf{u}_{\textnormal{ms}}^{n-1}|^2 
- \frac{[a(p_h - p_{\textnormal{ms}}^{n-1}, \beta_i)]^2}{a(\beta_i, \beta_i)} \\
&= \int_{\Omega^{\epsilon}} \kappa^{-1} |\mathbf{u}_h - \mathbf{u}_{\textnormal{ms}}^{n-1}|^2 
- \frac{[(f, \beta_i) - a(p_{\textnormal{ms}}^{n-1}, \beta_i)]^2}{a(\beta_i, \beta_i)} \\
&= \int_{\Omega^{\epsilon}} \kappa^{-1} |\mathbf{u}_h - \mathbf{u}_{\textnormal{ms}}^{n-1}|^2 
- \| r_i \|_{V_i}^2.
\end{align*}

Applying Lemma~\ref{lem:u_hms_ri}, we deduce that
\[
\int_{\Omega^{\epsilon}} \kappa^{-1} |\mathbf{u}_h - \mathbf{u}_{\textnormal{ms}}^n|^2 
\le \left( 1 - \frac{\Lambda \| r_i \|_{V_i}^2}{2 \sum_{j=1}^{N_c} \| r_j \|_{V_j}^2} \right)
\int_{\Omega^{\epsilon}} \kappa^{-1} |\mathbf{u}_h - \mathbf{u}_{\textnormal{ms}}^{n-1}|^2.
\]
When $k$ online basis functions are added, selected based on the residual indicators $\delta_i$, the estimate generalizes to
\[
\int_{\Omega^{\epsilon}} \kappa^{-1} |\mathbf{u}_h - \mathbf{u}_{\textnormal{ms}}^n|^2 
\le \left(1 - \frac{\theta \Lambda}{2}\right) 
\int_{\Omega^{\epsilon}} \kappa^{-1} |\mathbf{u}_h - \mathbf{u}_{\textnormal{ms}}^{n-1}|^2.
\]
Using this inequality inductively over the number of enrichment steps yields the desired result.
\end{proof}

\paragraph{Remark.} 
The convergence behavior of the online enrichment algorithm is governed by the minimal excluded eigenvalue $\Lambda$, the enrichment threshold $\theta$, and the number of enrichment steps $n$. 
The case $\theta = 1$ corresponds to uniform enrichment and leads to the most rapid error decay. 
When $\Lambda$ is large, the offline space is already sufficiently rich, and only a few online updates are necessary.

\section{Numerical results} \label{sec:numerical_results}
In this section, we evaluate the performance of our approach using three different models. Figure \ref{fig:domains} provides an illustration of these model domains. Models 1 and 2 are synthetic domains constructed by removing randomly sized circles (which may overlap) from an initially rectangular domain. Model 3 consists of a two-dimensional slice of a sandstone sample, as described in \cite{mehmani2021multiscale,berg2018gildehauser}. 
All permeability fields and mesh grids were generated using the MATLAB Reservoir Simulation Toolbox (MRST) \cite{lie2019introduction}.
The fine grid comprises 511,756 cells for Model 1, 264,863 cells for Model 2, and 338,927 cells for Model 3. 
We consider two different coarse mesh sizes in this study to examine their effect on the solution. The first coarse grid uses a spacing of $H$ such that each coarse block contains 20 fine grid cells in each coordinate direction. The second coarse grid uses a spacing of $H/2$, i.e., a refinement by a factor of two in each direction. We impose Dirichlet boundary conditions of $p=1$ on the left boundary and $p=0$ on the right boundary. This setup corresponds to a unit pressure drop across the domain. All other boundaries are assigned no-flux conditions (i.e., $\mathbf{u} \cdot \mathbf{n} = 0$). The relative $L^2$ errors in pressure and velocity are defined as follows:

\[
e_p = \frac{\int_{\Omega^{\epsilon}} |p_h - p_{\textnormal{ms}}|^2}{\int_{\Omega^{\epsilon}} |p_h|^2}, \quad
e_{\mathbf{u}} = \frac{\int_{\Omega^{\epsilon}} |\mathbf{u}_h - \mathbf{u}_{\textnormal{ms}}|^2}{\int_{\Omega^{\epsilon}} |\mathbf{u}_h|^2}.
\]

Here $p_h$ and $\mathbf{u}_h$ denote the fine-scale pressure and velocity solutions, and $p_{\textnormal{ms}}$ and $\mathbf{u}_{\textnormal{ms}}$ are the corresponding multiscale solutions. In our computations, we employ several additional techniques to further reduce the dimensionality of the multiscale solution space in each local region. In the offline stage, for example, we exclude any of the first $l_i$ eigenfunctions from the offline space if their corresponding eigenvalues exceed $100{,}000$. Our analysis indicates that convergence is primarily governed by the eigenfunctions associated with the smallest eigenvalues; hence, those basis functions corresponding to very large eigenvalues (above this threshold) can be safely omitted from the offline multiscale space. We denote by $L$ the prescribed number of offline basis functions per coarse block. Consequently, due to this eigenfunction exclusion and the constraints imposed by the local geometry, the actual number of basis functions per coarse block may be less than $L$. In the online stage, we adopt a stopping criterion for local enrichment: if the local residual falls below $0.1\%$, no new online basis function is constructed. This threshold is applied in both the uniform and adaptive enrichment strategies. Such a criterion avoids constructing unnecessary basis functions in regions where the solution is already sufficiently accurate.

\begin{figure}[htbp]
\centering
\begin{subfigure}[b]{0.32\textwidth}
\includegraphics[width=\linewidth]{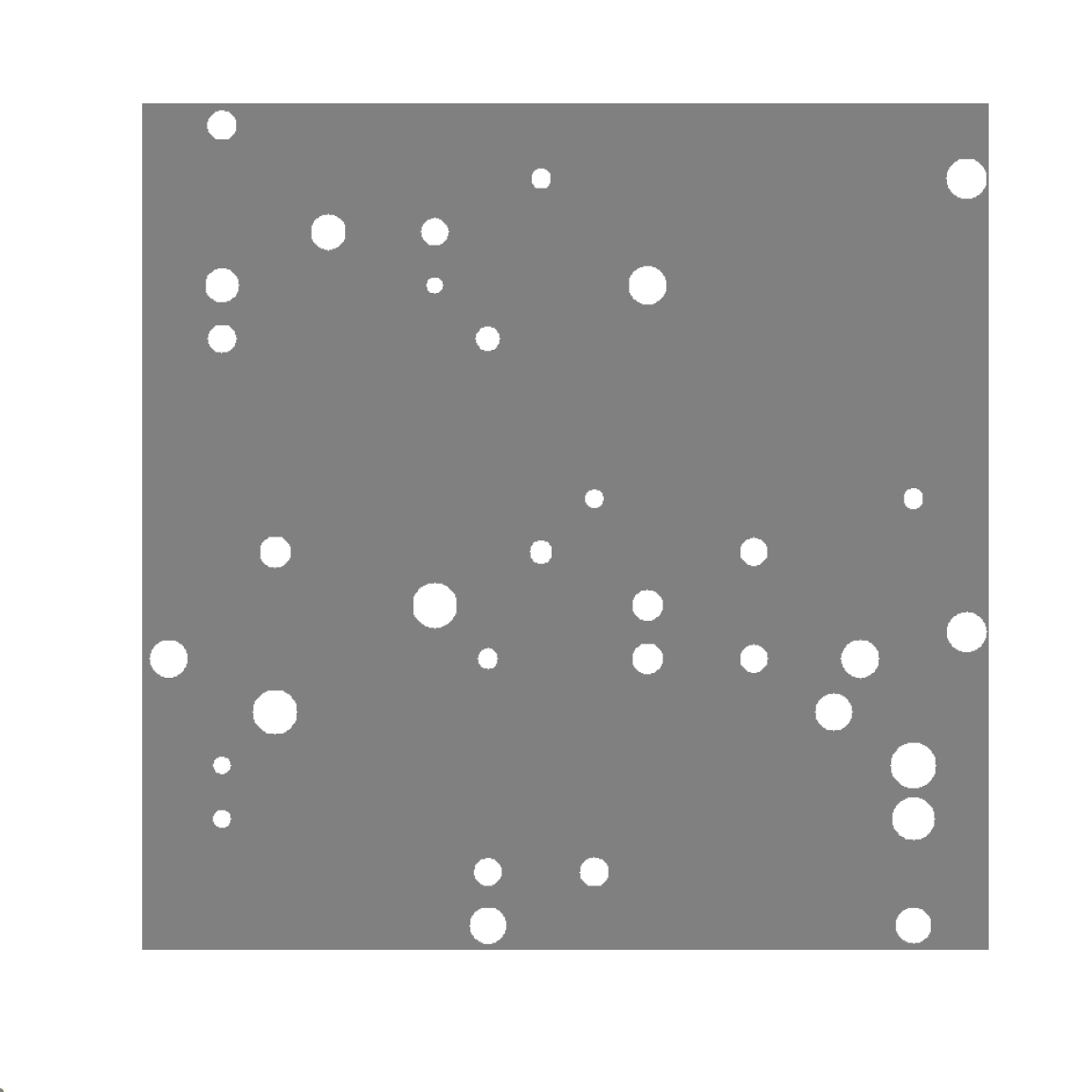}
\caption{Model 1}
\end{subfigure}
\begin{subfigure}[b]{0.32\textwidth}
\includegraphics[width=\linewidth]{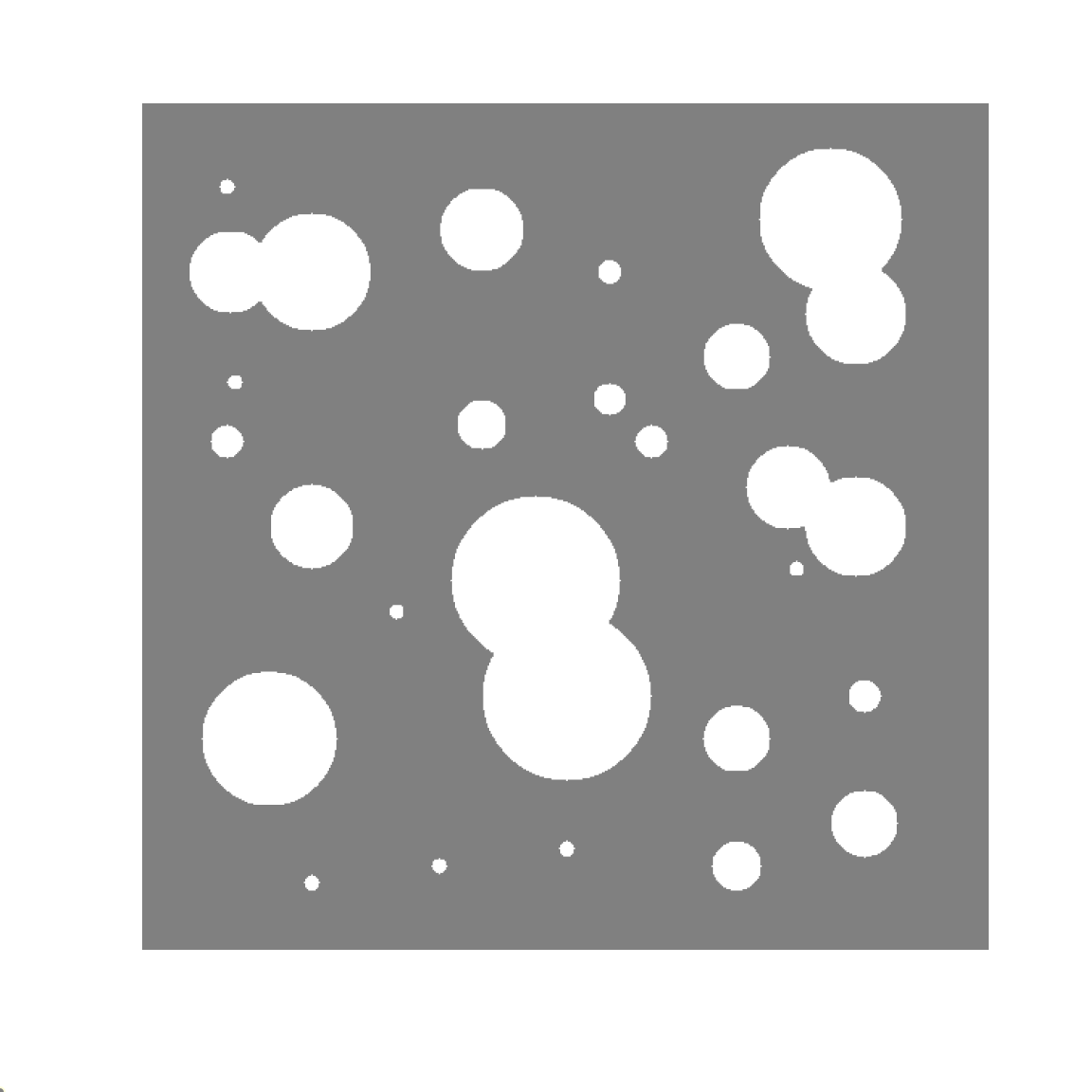}
\caption{Model 2}
\end{subfigure}
\begin{subfigure}[b]{0.32\textwidth}
\includegraphics[width=\linewidth]{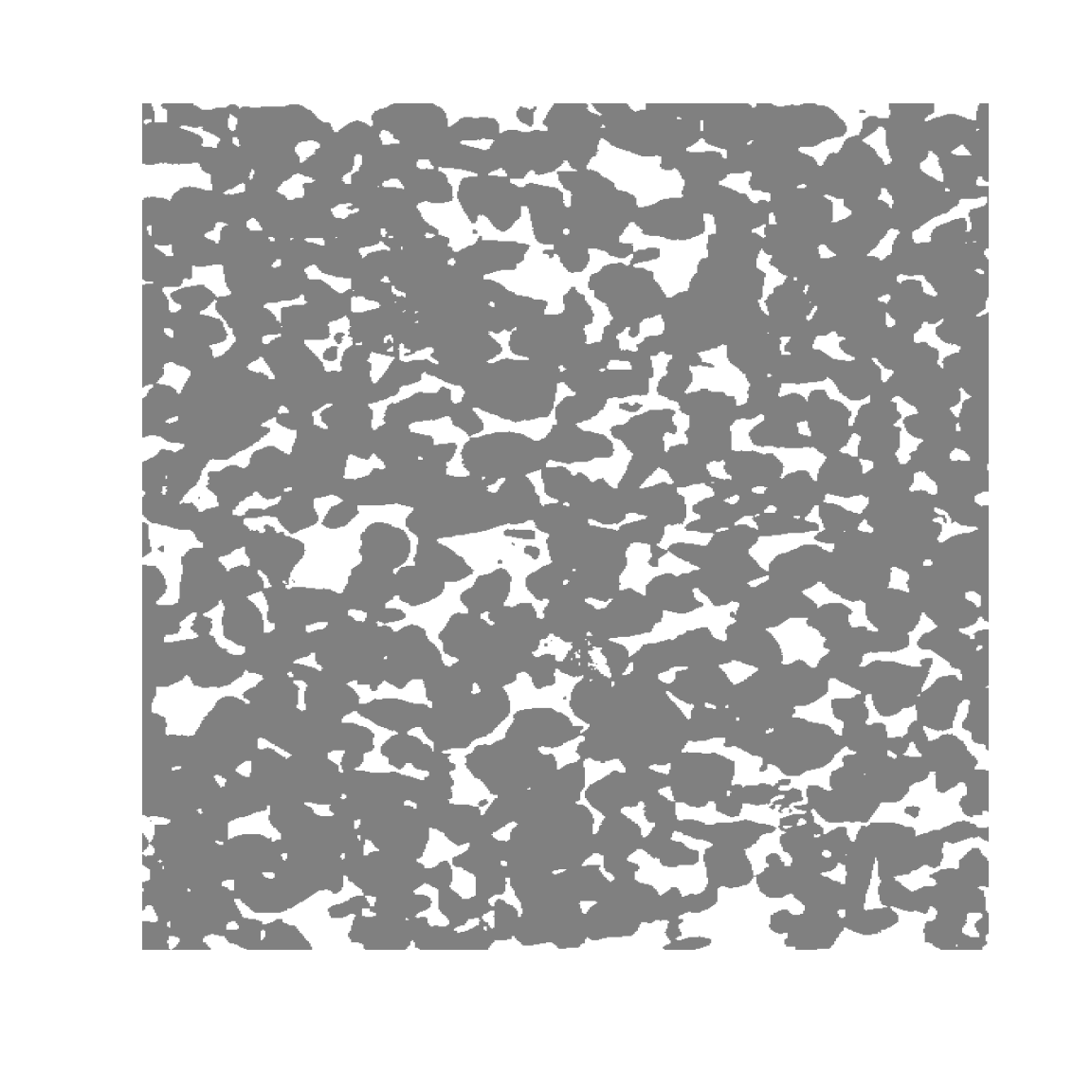}
\caption{Model 3}
\end{subfigure}
\caption{Three different perforated media. The fine grid consists of 511,756 cells for Model 1; 264,863 for Model 2; and 338,927 for Model 3.}
\label{fig:domains}
\end{figure}

\subsection{Effect of offline parameters}
In this subsection, we present offline simulation results obtained using varying numbers of offline basis functions and different coarse partitions. Detailed results for the three test cases are provided in Tables \ref{tab:ex1_offline}–\ref{tab:ex3_offline}. As expected from our numerical analysis, increasing the number of offline multiscale basis functions leads to a reduction in both pressure and velocity errors. Refining the coarse mesh (i.e., using a smaller coarse grid size $H$) also yields smaller errors. This is because using a finer coarse grid, while maintaining the same number of basis functions per coarse block, effectively increases the total dimension of the multiscale solution space, thereby improving accuracy. Another important observation is that for different coarse grid configurations, if the dimension of the multiscale space is the same, the corresponding errors are comparable. For example, in Table \ref{tab:ex1_offline}, a coarse grid of size $H$ (multiscale space dimension 54,574) yields a pressure error of $0.86\%$, whereas a refined coarse grid of size $H/2$ (dimension 52,397) gives a similar pressure error of $1.17\%$. Likewise, for multiscale space dimensions 108,772 ($H$) and 104,620 ($H/2$), the pressure errors are $0.33\%$ and $0.42\%$, respectively. The velocity error $e_{\mathbf{u}}$ exhibits the same trend. Similar trends are observed in cases 2 and 3, as shown in Tables \ref{tab:ex2_offline} and \ref{tab:ex3_offline}.

\begin{table}[htbp]
\centering
% n=20
\begin{tabular}{|c|c|c|}
\hline
\multicolumn{3}{|c|}{$H$} \\ \hline
Dim & $e_p$ & $e_{\mathbf{u}}$  \\ \hline
1,377 & 1.50e-01 & 3.35e+00 \\ \hline
13,706 & 3.38e-02 & 7.11e-01 \\ \hline
27,361 & 1.80e-02 & 4.23e-01 \\ \hline
54,574 & 8.62e-03 & 2.49e-01 \\ \hline
108,772 & 3.33e-03 & 1.32e-01 \\ \hline
\end{tabular}
% n=10
\begin{tabular}{|c|c|c|}
\hline
\multicolumn{3}{|c|}{$H/2$} \\ \hline
Dim & $e_p$ & $e_{\mathbf{u}}$  \\ \hline
5,261 & 9.06e-02 & 2.51e+00 \\ \hline
52,397 & 1.17e-02 & 3.57e-01 \\ \hline
104,620 & 4.18e-03 & 1.73e-01 \\ \hline
208,428 & 1.19e-03 & 7.37e-02 \\ \hline
413,107 & 1.27e-04 & 1.13e-02 \\ \hline
\end{tabular}
\caption{Errors for different numbers of offline basis functions and coarse grids in Model 1.}
\label{tab:ex1_offline}
\end{table}

\begin{table}[htbp]
\centering
% n=20
\begin{tabular}{|c|c|c|}
\hline
\multicolumn{3}{|c|}{$H$} \\ \hline
Dim & $e_p$ & $e_{\mathbf{u}}$  \\ \hline
770 & 1.79e-01 & 3.54e+00 \\ \hline
7,586 & 1.03e-02 & 6.30e-01 \\ \hline
15,101 & 2.24e-02 & 4.37e-01 \\ \hline
29,880 & 1.07e-02 & 2.80e-01 \\ \hline
58,860 & 4.42e-03 & 1.54e-01 \\ \hline
\end{tabular}
% n=10
\begin{tabular}{|c|c|c|}
\hline
\multicolumn{3}{|c|}{$H/2$} \\ \hline
Dim & $e_p$ & $e_{\mathbf{u}}$  \\ \hline
2,918 & 1.06-01 & 2.59e+00 \\ \hline
28,795 & 1.39e-02 & 3.43e-01 \\ \hline
57,199 & 5.05e-03 & 1.86e-01 \\ \hline
113,263 & 1.64e-03 & 7.77e-02 \\ \hline
221,767 & 1.77e-04 & 1.34e-02 \\ \hline
\end{tabular}
\caption{Errors for different numbers of offline basis functions and coarse grids in Model 2.}
\label{tab:ex2_offline}
\end{table}

\begin{table}[htbp]
\centering
% n=20
\begin{tabular}{|c|c|c|}
\hline
\multicolumn{3}{|c|}{$H$} \\ \hline
Dim & $e_p$ & $e_{\mathbf{u}}$  \\ \hline
1,411 & 3.19e-01 & 2.20e+00 \\ \hline
13,246 & 9.84e-02 & 7.16e-01 \\ \hline
25,499 & 5.37e-02 & 4.84e-01 \\ \hline
48,994 & 2.69e-02 & 3.14e-01 \\ \hline
93,529 & 1.14e-02 & 1.83e-01 \\ \hline
\end{tabular}
% n=10
\begin{tabular}{|c|c|c|}
\hline
\multicolumn{3}{|c|}{$H/2$} \\ \hline
Dim & $e_p$ & $e_{\mathbf{u}}$  \\ \hline
4,532 & 2.34e-01 & 1.82e+00 \\ \hline
43,051 & 3.58e-02 & 3.96e-01 \\ \hline
83,420 & 1.48e-02 & 2.21e-01 \\ \hline
159,352 & 5.62e-03 & 1.16e-01 \\ \hline
291,830 & 7.82e-04 & 2.19e-02 \\ \hline
\end{tabular}
\caption{Errors for different numbers of offline basis functions and coarse grids in Model 3.}
\label{tab:ex3_offline}
\end{table}

\subsection{Offline bases vs.\ Online bases}

In this subsection, we compare the efficiency of offline and online multiscale basis functions. Starting with a single offline basis function per coarse block, we uniformly enrich the multiscale space by successively adding either offline or online basis functions over nine iterations. To examine the influence of grid resolution, we perform experiments on different coarse mesh sizes. The results for all three media are presented in Figure \ref{fig:er_offon}.
From this figure, we observe that for a given multiscale space dimension, the resulting errors are similar across different coarse grids. This observation is consistent with the findings from the previous subsection. In addition, the error decay achieved through online enrichment is significantly faster than that obtained via offline enrichment, highlighting the superior efficiency of the online approach.

For all three test cases, we find that with a coarse mesh size of $H$, the pressure error drops to the order of $10^{-3}$ after nine steps of online enrichment, whereas the corresponding error for offline enrichment remains at the order of $10^{-2}$. When the coarse mesh is refined to $H/2$, the final error of pressure is approximately $10^{-2}$ for offline enrichment and $10^{-4}$ for online enrichment.
Moreover, for a fixed multiscale space dimension, using a coarser mesh generally leads to a more accurate solution.
This improvement is due to the fact that, under a fixed global multiscale dimension, coarser grids allow a larger portion of the degrees of freedom to be dedicated to constructing online basis functions within each coarse block, thereby enabling more effective capture the influence of the source term.

We also report the computational cost for both offline and online enrichments on the coarse grid with mesh size $H$ in Table~\ref{tab:cputime_offon}. 
All computations were performed on a standard laptop (Intel Core i7 CPU, 32~GB RAM), and the reported times correspond to solving the coarse-scale linear systems associated with the multiscale formulations. 
For the offline enrichment, the reported times correspond to solving the coarse-scale system for different total numbers of multiscale basis functions per coarse block ($L=1,4,7,10$), without any iterative procedure. In other words, the offline computation does not involve repeated coarse-scale solves as the basis functions are fixed. 
In contrast, the online enrichment procedure involves multiple iterations, with one coarse-scale problem solved at each iteration. In Table~\ref{tab:cputime_offon}, the online timings correspond to 3, 6, and 9 iterations, where new online basis functions are sequentially added to enrich the multiscale space. Naturally, this iterative procedure increases the overall computational cost. 
Nevertheless, the online approach achieves a much faster reduction of the approximation error for a given multiscale space dimension, reflecting its effectiveness in improving solution accuracy relative to the offline-only approach.

\begin{table}[H]
\centering
\begin{tabular}{|c|c|c|c|c|c|c|}
\hline
\multirow{2}*{}
& \multicolumn{2}{|c|}{Model 1} 
& \multicolumn{2}{|c|}{Model 2} 
& \multicolumn{2}{|c|}{Model 3} \\ \cline{2-7}
& Offline & Online & Offline & Online & Offline & Online 
\\ \hline
1 & 0.0056 & 0.0056 & 0.0037 & 0.0037 & 0.0046 & 0.0046 \\ \hline
4 & 0.0277 & 0.0919 & 0.0131 & 0.0330 & 0.0283 & 0.0537 \\ \hline
7 & 0.0665 & 0.2614 & 0.0363 & 0.1109 & 0.0551 & 0.1721 \\ \hline
10 & 0.1636 & 0.6397 & 0.0633 & 0.2661 & 0.0881 & 0.3982 \\ \hline
\end{tabular}
\caption{CPU time (in seconds) for three models with different numbers of enrichment steps. 
The fine-grid solver times for Models~1--3 are 1.2039~s, 0.6499~s, and 0.7563~s, respectively.}
\label{tab:cputime_offon}
\end{table}

\begin{figure}[htbp]
\centering
\begin{subfigure}[b]{0.49\textwidth}
\includegraphics[width=\linewidth]{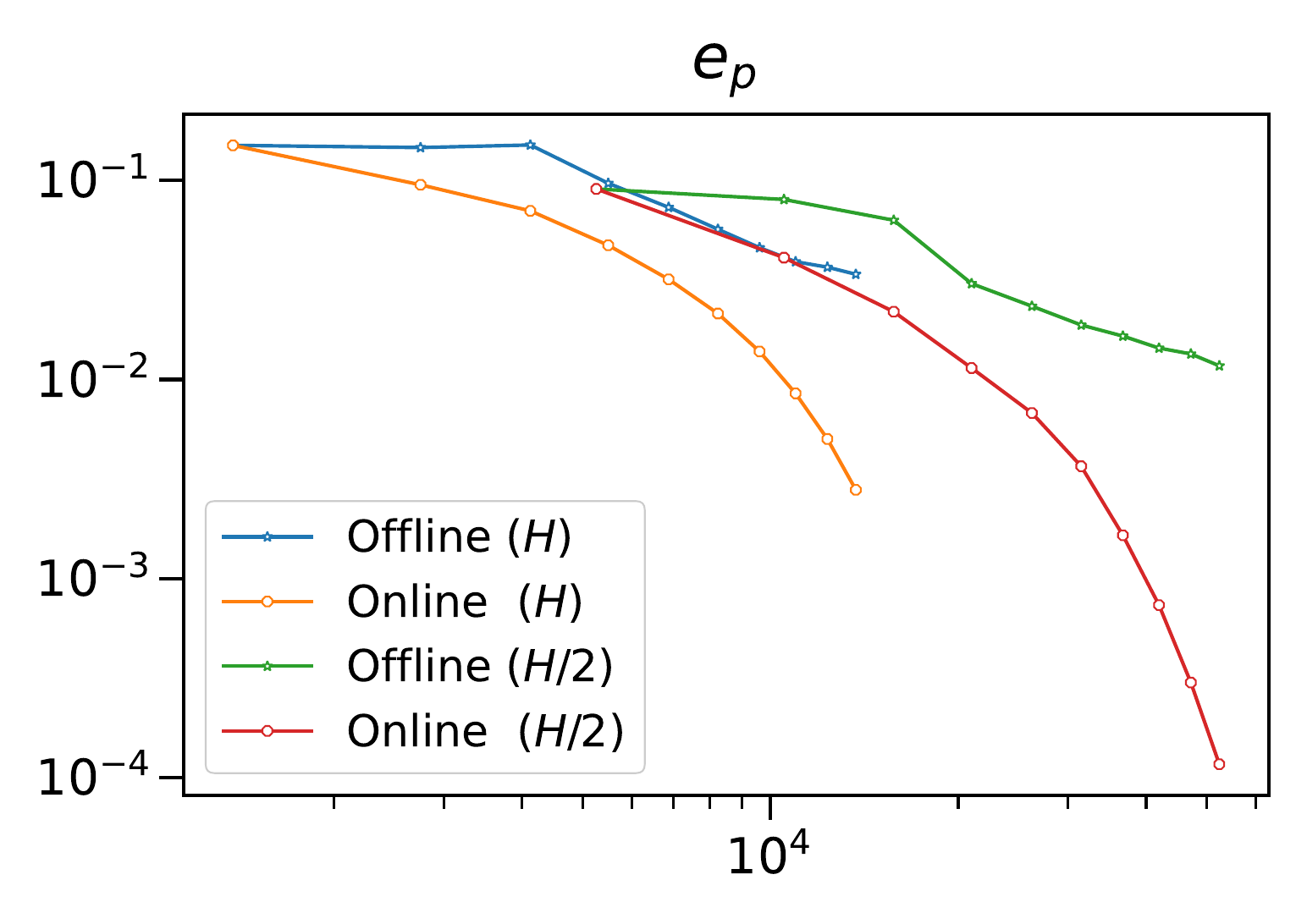}
\caption{}
\end{subfigure}
\begin{subfigure}[b]{0.49\textwidth}
\includegraphics[width=\linewidth]{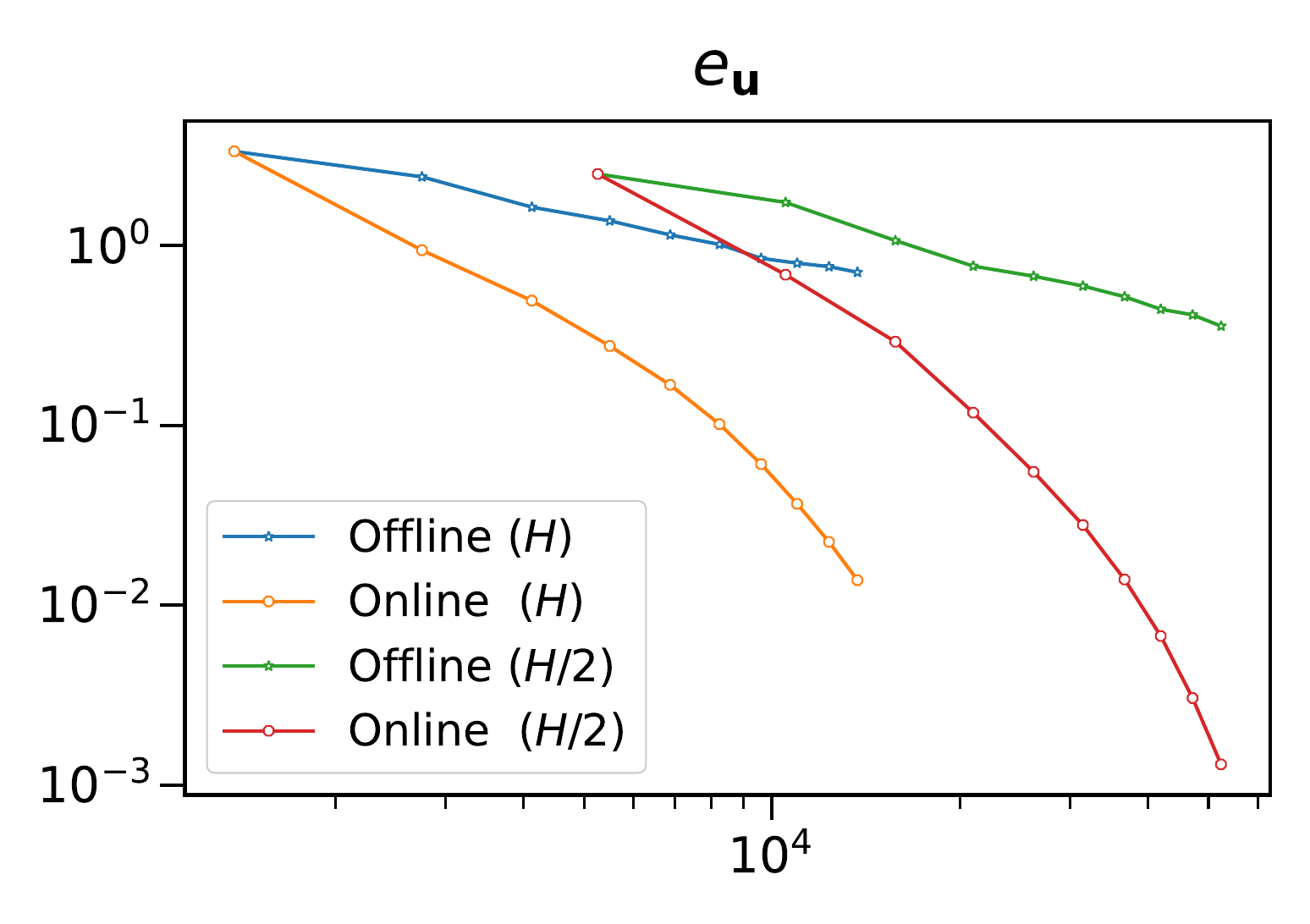}
\caption{}
\end{subfigure}
\begin{subfigure}[b]{0.49\textwidth}
\includegraphics[width=\linewidth]{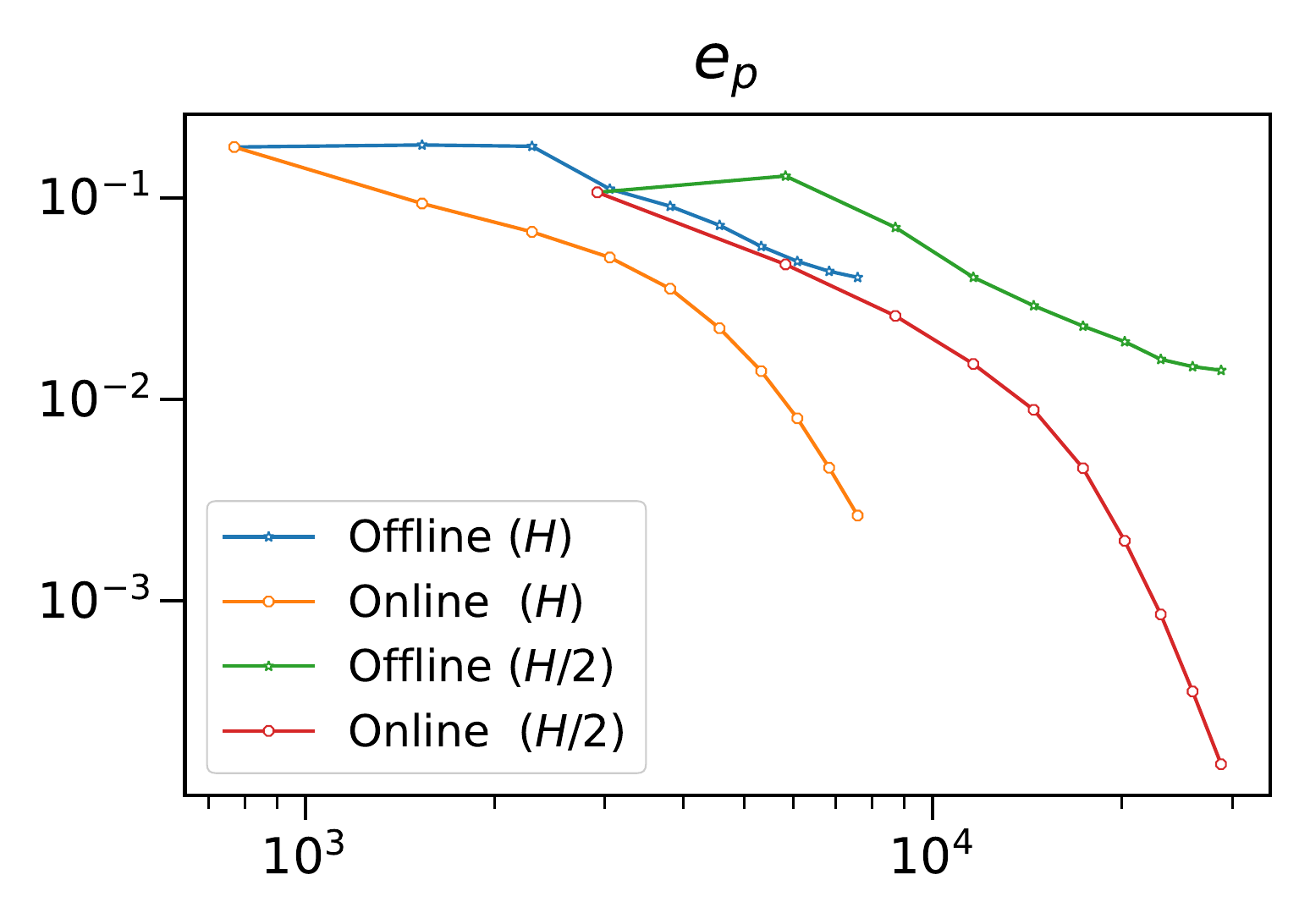}
\caption{}
\end{subfigure}
\begin{subfigure}[b]{0.49\textwidth}
\includegraphics[width=\linewidth]{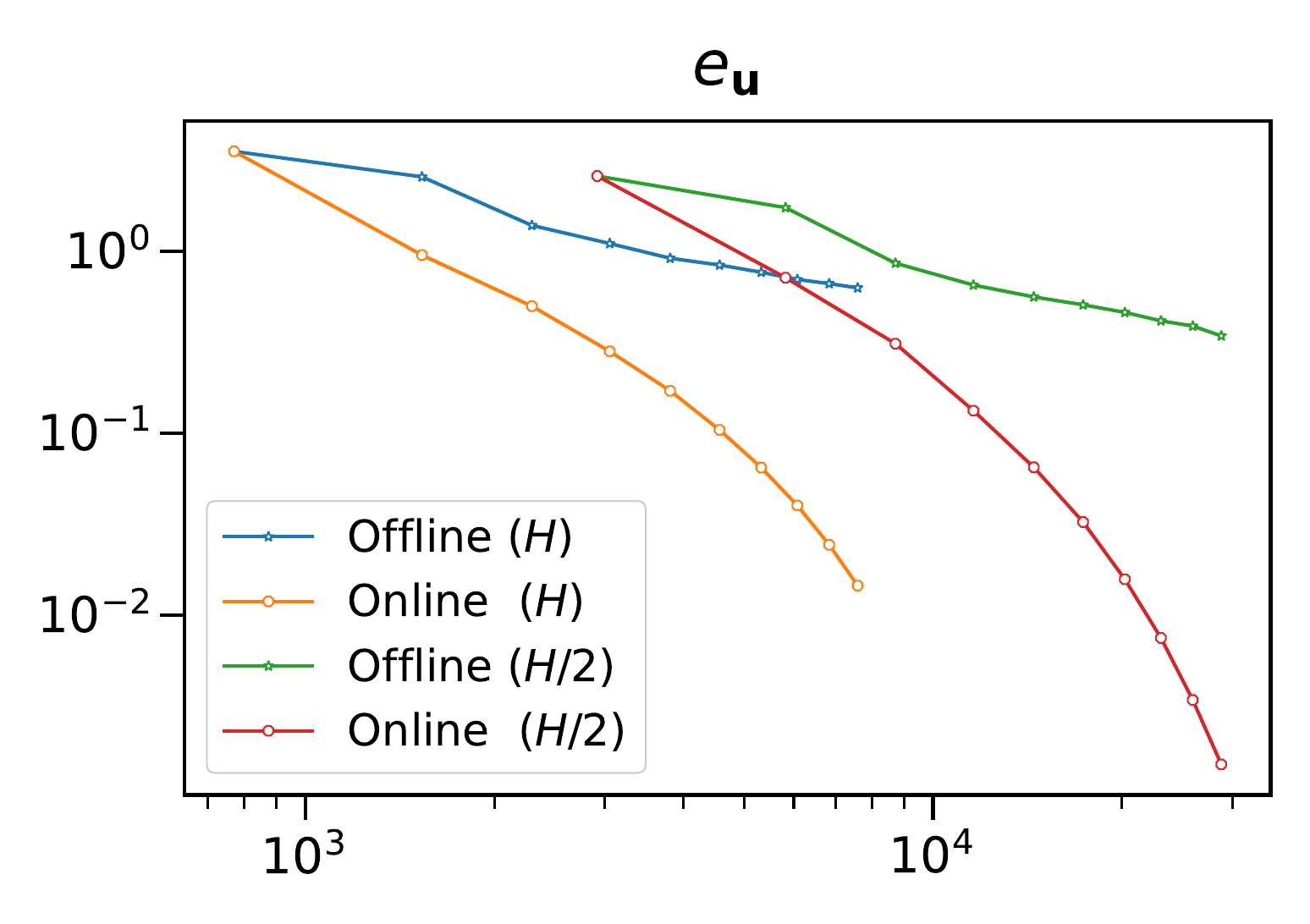}
\caption{}
\end{subfigure}
\begin{subfigure}[b]{0.49\textwidth}
\includegraphics[width=\linewidth]{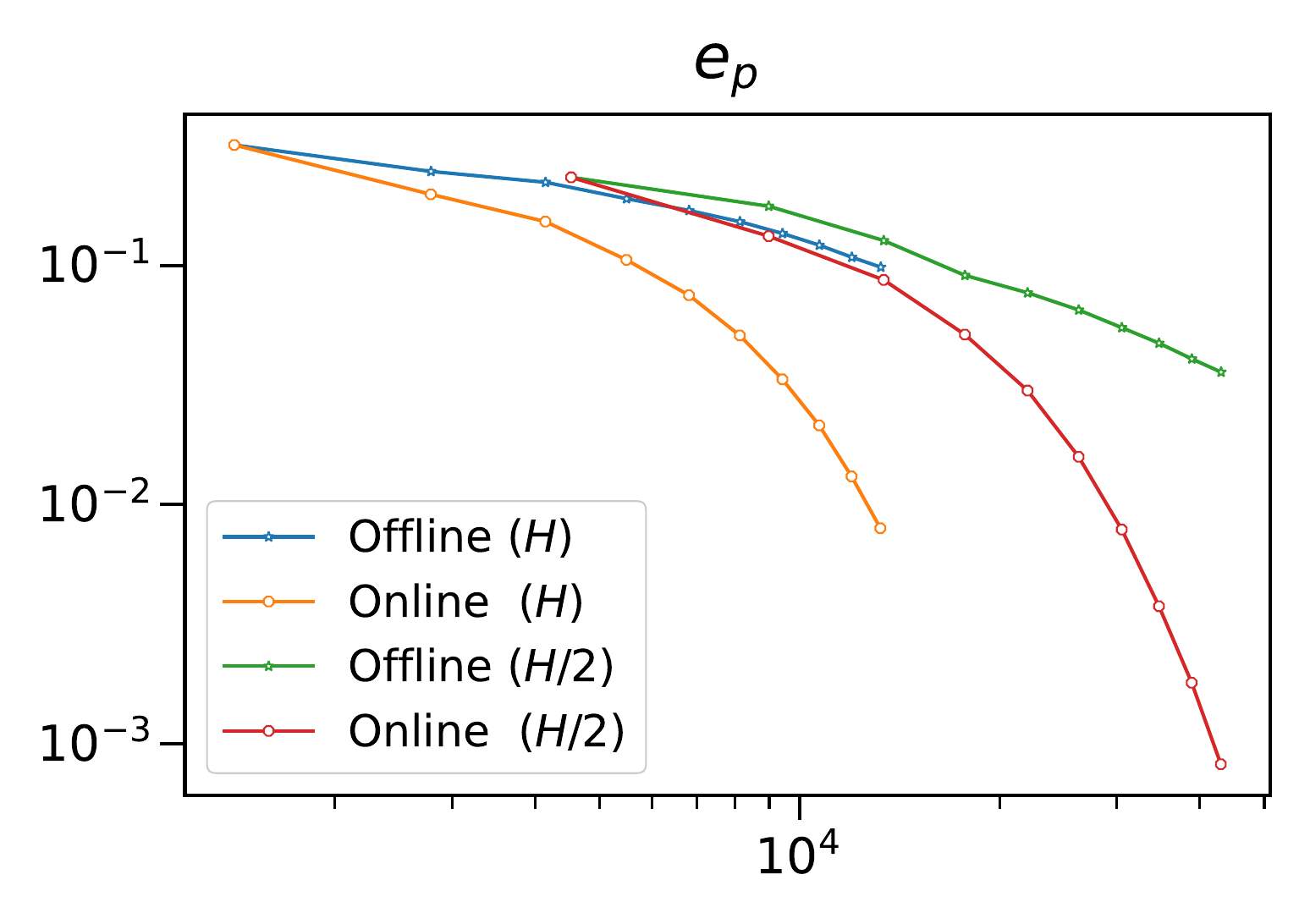}
\caption{}
\end{subfigure}
\begin{subfigure}[b]{0.49\textwidth}
\includegraphics[width=\linewidth]{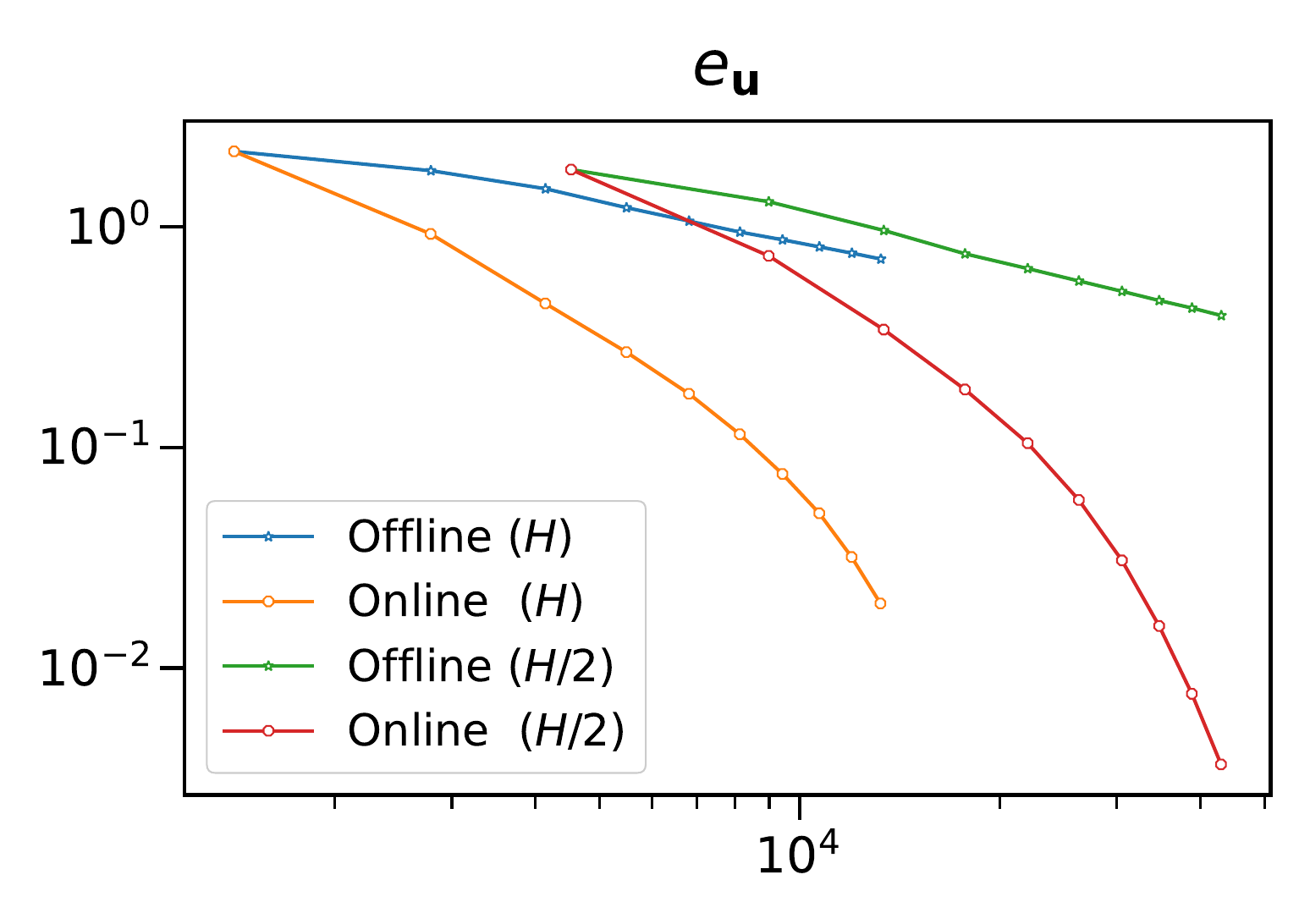}
\caption{}
\end{subfigure}
\caption{Errors with offline and online enrichment for three models: (a, b) Model 1; (c, d) Model 2; (e, f) Model 3.}
\label{fig:er_offon}
\end{figure}

\subsection{Effect of Offline Basis Number and Adaptive Parameter \texorpdfstring{$\theta$}{theta}}

\begin{figure}[htbp]
\centering
\begin{subfigure}[b]{0.49\textwidth}
\includegraphics[width=\linewidth]{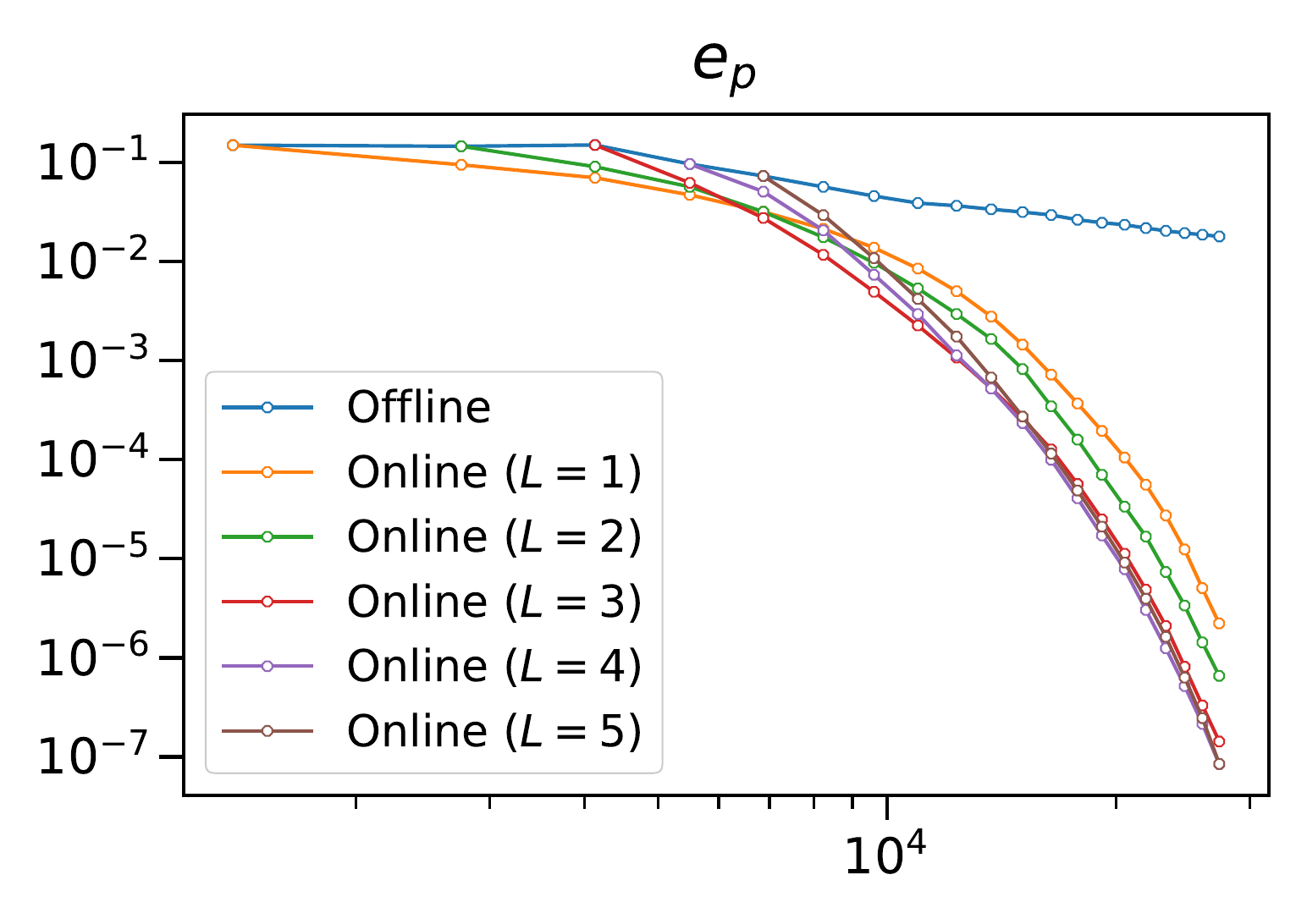}
\caption{}
\end{subfigure}
\begin{subfigure}[b]{0.49\textwidth}
\includegraphics[width=\linewidth]{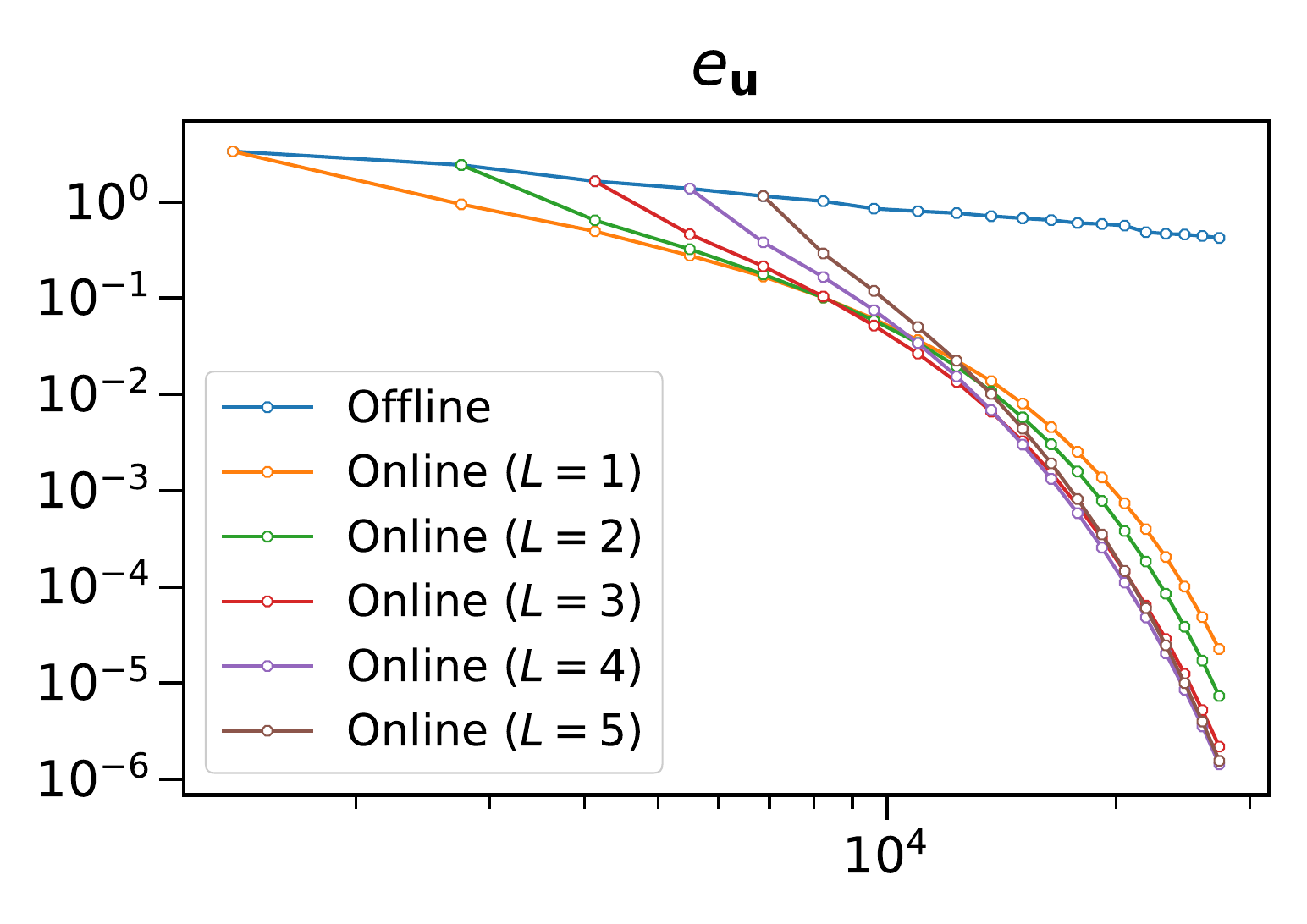}
\caption{}
\end{subfigure}
\begin{subfigure}[b]{0.49\textwidth}
\includegraphics[width=\linewidth]{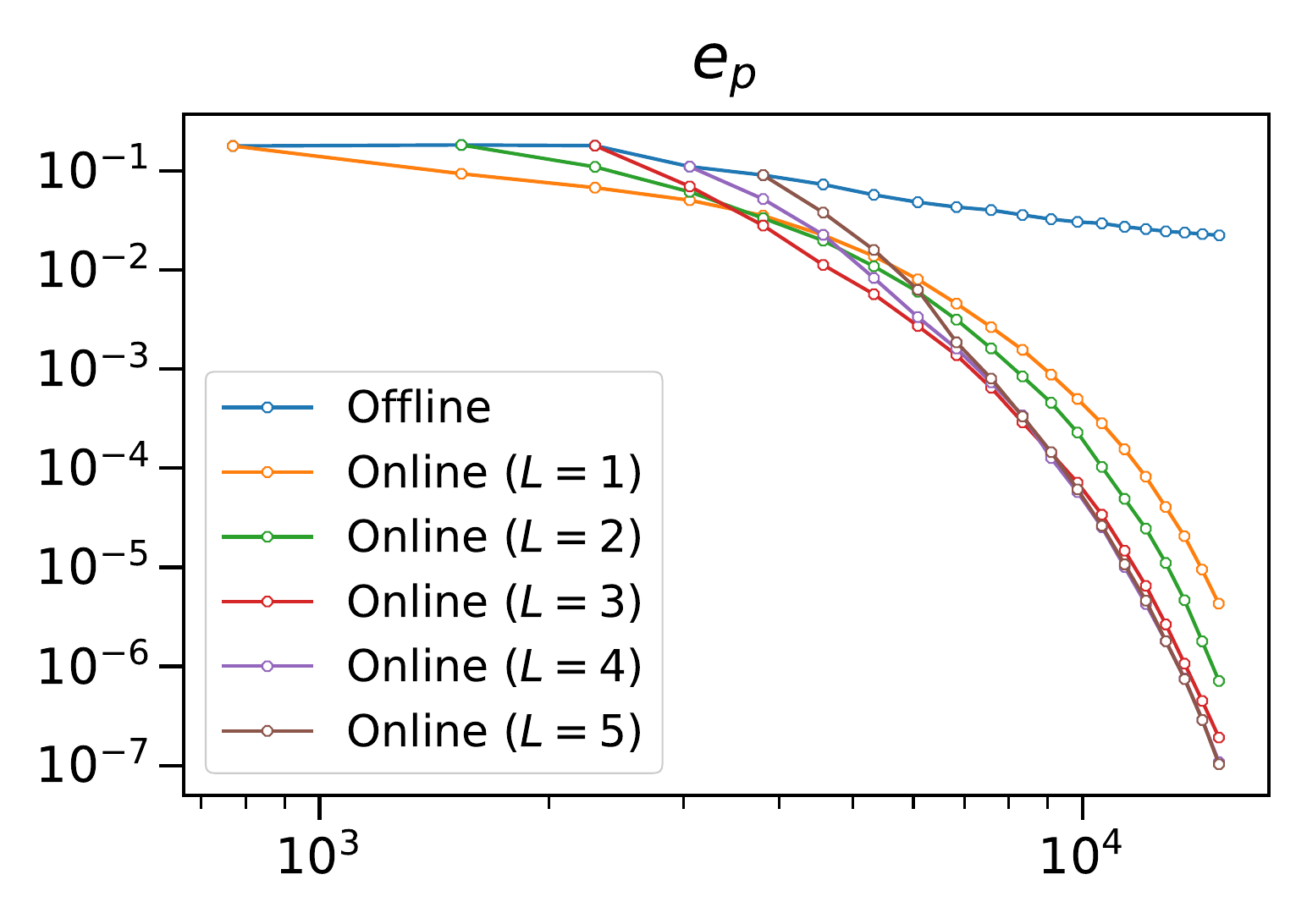}
\caption{}
\end{subfigure}
\begin{subfigure}[b]{0.49\textwidth}
\includegraphics[width=\linewidth]{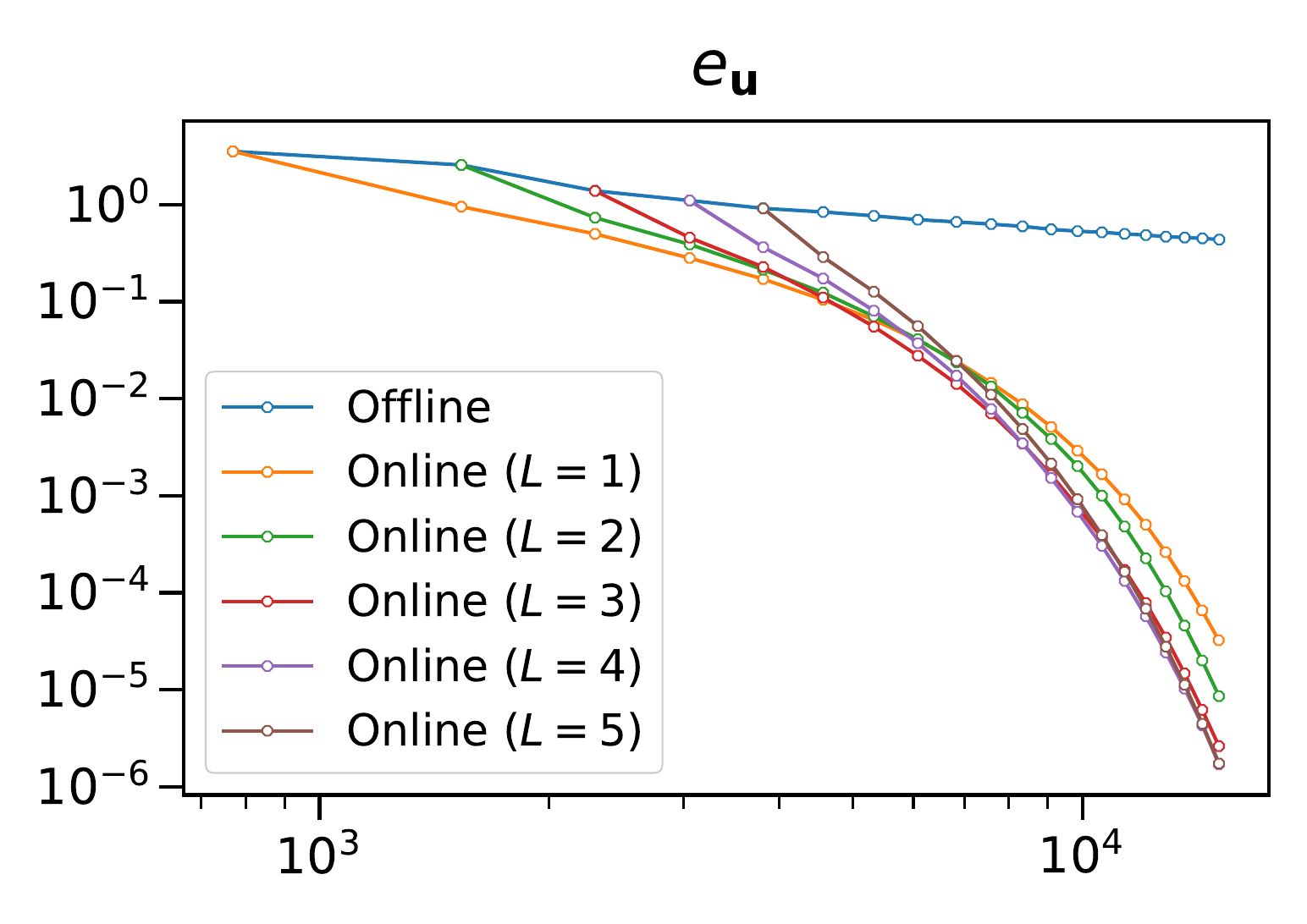}
\caption{}
\end{subfigure}
\begin{subfigure}[b]{0.49\textwidth}
\includegraphics[width=\linewidth]{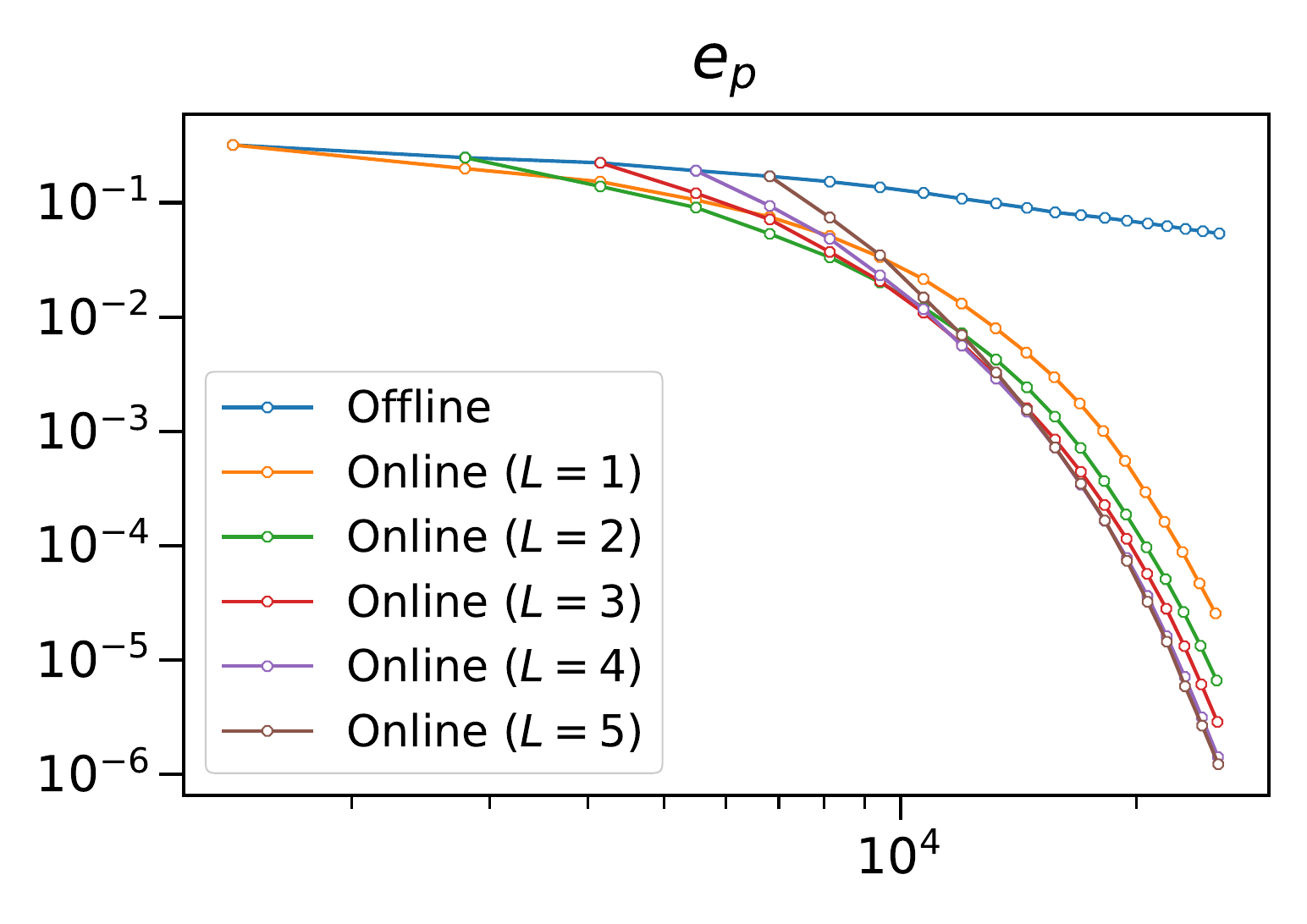}
\caption{}
\end{subfigure}
\begin{subfigure}[b]{0.49\textwidth}
\includegraphics[width=\linewidth]{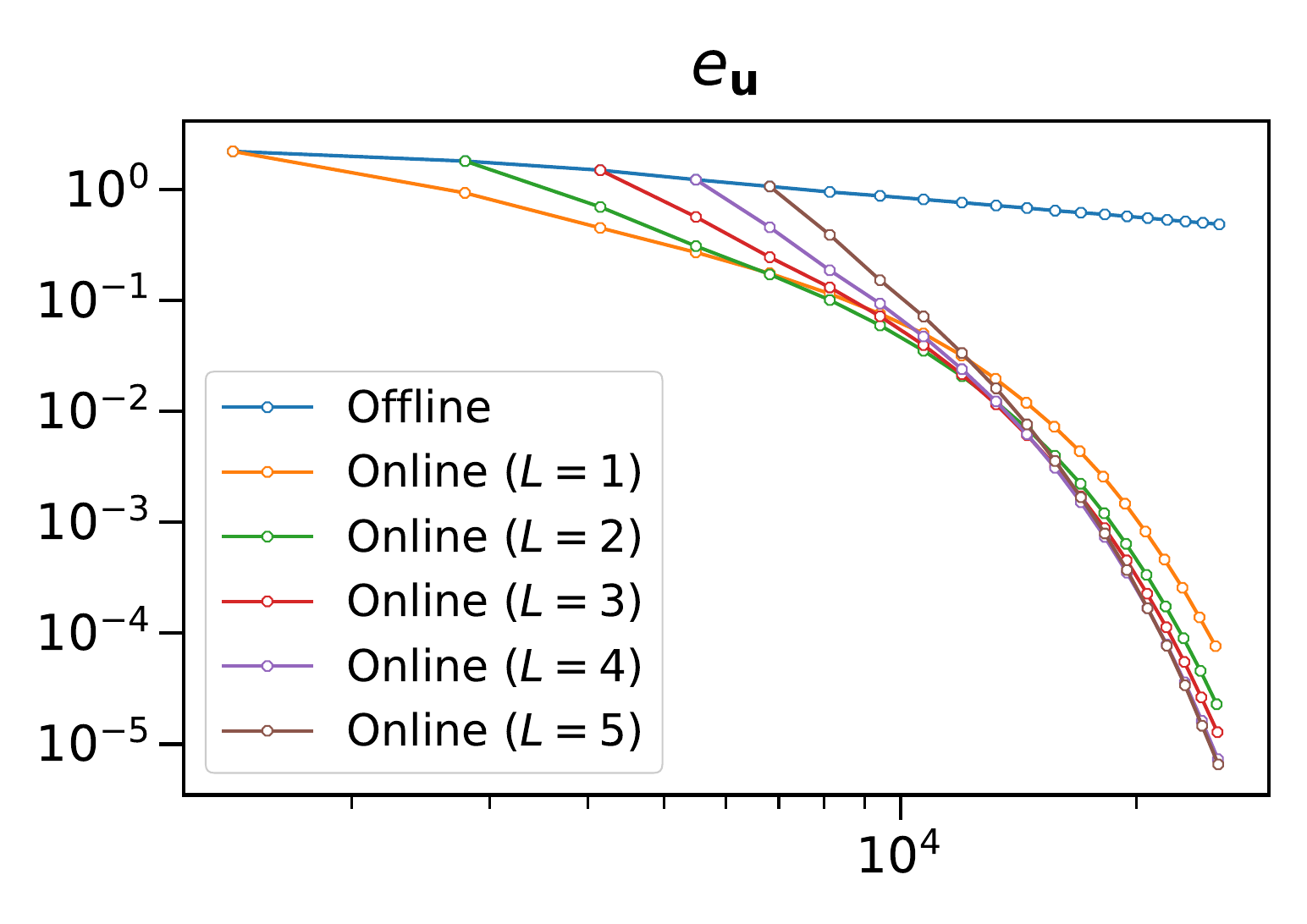}
\caption{}
\end{subfigure}
\caption{Errors with different numbers of offline basis functions $L$ in the online iteration for three models: (a, b) Model 1; (c, d) Model 2; (e, f) Model 3.}
\label{fig:er_onli}
\end{figure}

In this subsection, we investigate the influence of the number of offline basis functions $L$ and the adaptive parameter $\theta$ on the performance of the proposed method, under a fixed coarse grid size $H$.

We first examine how varying the number of offline basis functions affects the solution. Specifically, we consider five initial offline spaces with $L = 1, 2, \dots, 5$ basis functions per coarse block. In each case, we subsequently perform $20 - L$ steps of uniform online enrichment. The corresponding results are displayed in Figure \ref{fig:er_onli}, alongside the results from purely offline enrichment. As observed, the online enrichment consistently outperforms the offline strategy in terms of accuracy, corroborating the findings presented in the previous subsection. Furthermore, increasing the number of offline basis functions in the initial space improves the final accuracy of the online solution. However, the marginal improvement becomes negligible once three offline basis functions per coarse block are included.

Next, we explore the effect of different values of the adaptive parameter $\theta$. We consider four cases: $\theta = 1, 0.8, 0.6, 0.4$. Each test starts with three offline basis functions per coarse block, followed by $5$, $8$, $12$, and $17$ steps of online enrichment, respectively. The results are reported in Figure \ref{fig:er_ontheta}. As expected, uniform enrichment ($\theta = 1$) achieves the fastest convergence per iteration. However, adaptive enrichment leads to more accurate solutions for a fixed multiscale space dimension. This behavior is attributed to the selective allocation of online basis functions in regions with larger residuals, effectively reducing global error. This trend is consistently observed across all three media.
It is also observed that varying $\theta$ produces only minor differences in the overall convergence behavior.  
This phenomenon can be explained by the high efficiency of the online basis functions: once a region with large residuals is enriched, its error is rapidly reduced, leading to similar enrichment patterns in subsequent iterations even for different $\theta$ values.  
In practice, $\theta$ controls the trade-off between computational cost and adaptivity.  
Smaller $\theta$ values reduce the number of local problems solved per iteration, while larger values accelerate convergence at the expense of additional cost.  
A more systematic investigation of the optimal choice of $\theta$ and the tolerance parameter used for enrichment termination will be considered in future work.

Finally, we illustrate representative multiscale pressure solutions in Figures~\ref{fig:ex1_phms}--\ref{fig:ex3_phms}. The visual comparison shows no significant discrepancy between solutions, further confirming the robustness and efficiency of the proposed method. 

\begin{figure}[htbp]
\centering
\begin{subfigure}[b]{0.49\textwidth}
\includegraphics[width=\linewidth]{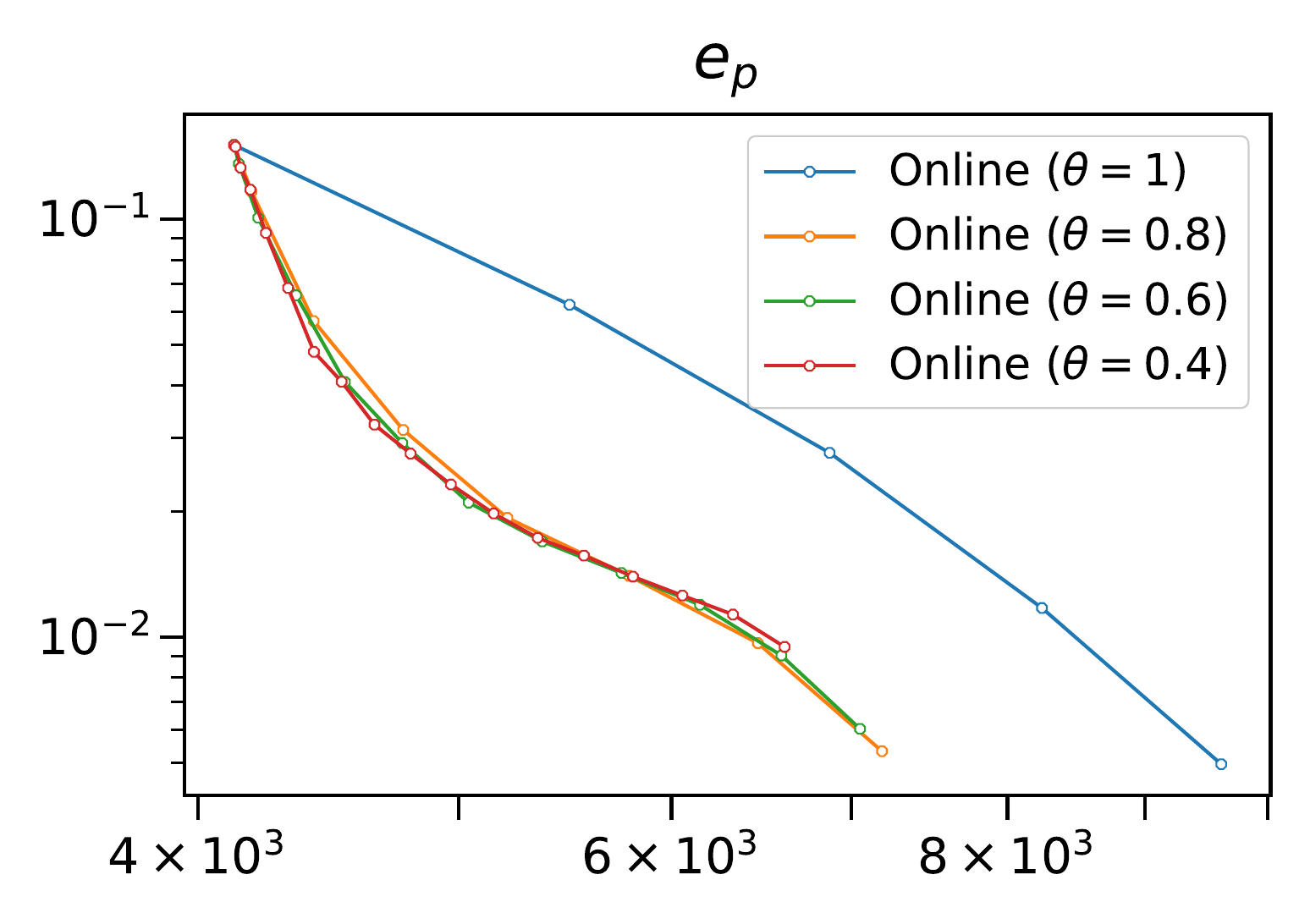}
\caption{}
\end{subfigure}
\begin{subfigure}[b]{0.49\textwidth}
\includegraphics[width=\linewidth]{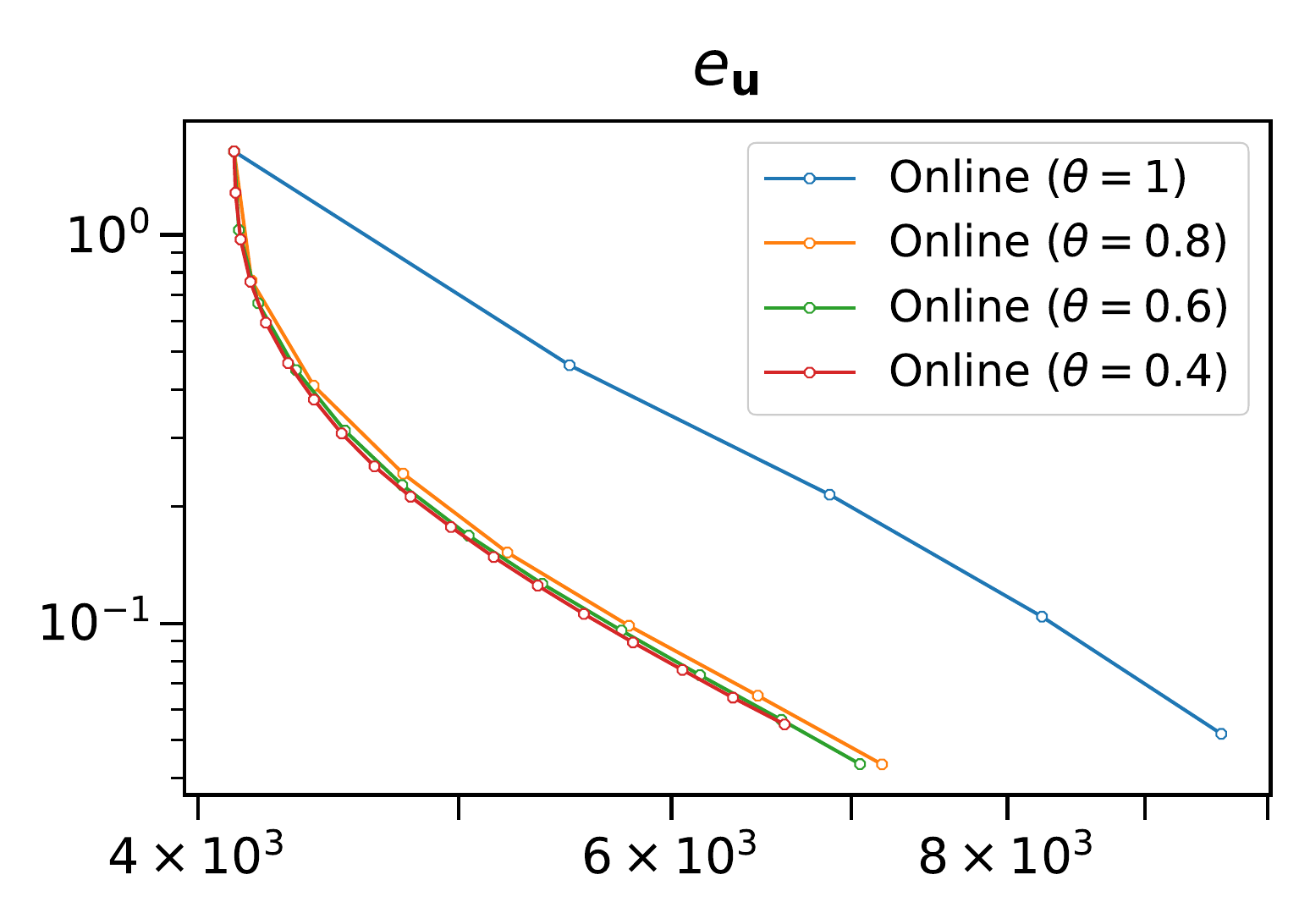}
\caption{}
\end{subfigure}
\begin{subfigure}[b]{0.49\textwidth}
\includegraphics[width=\linewidth]{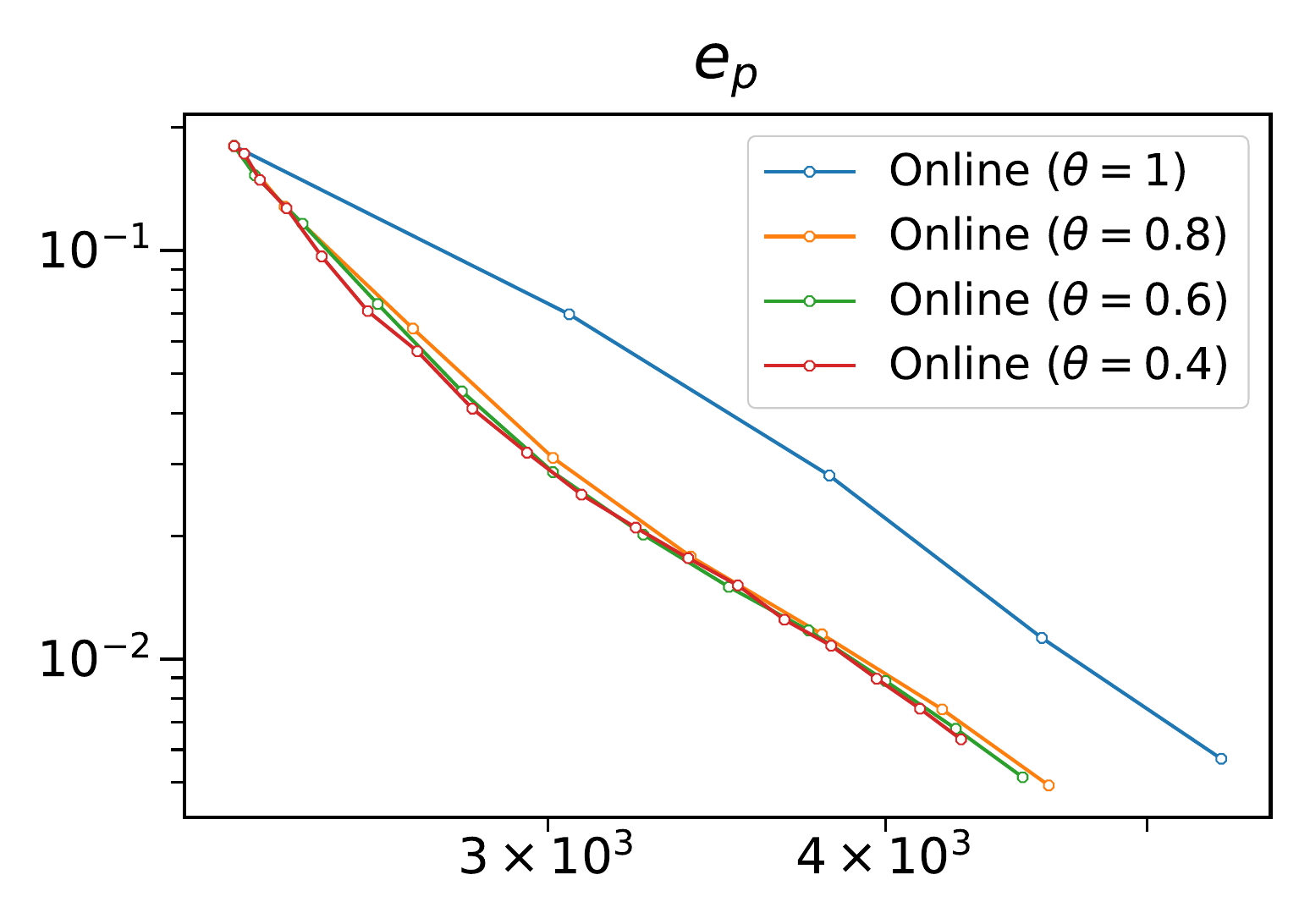}
\caption{}
\end{subfigure}
\begin{subfigure}[b]{0.49\textwidth}
\includegraphics[width=\linewidth]{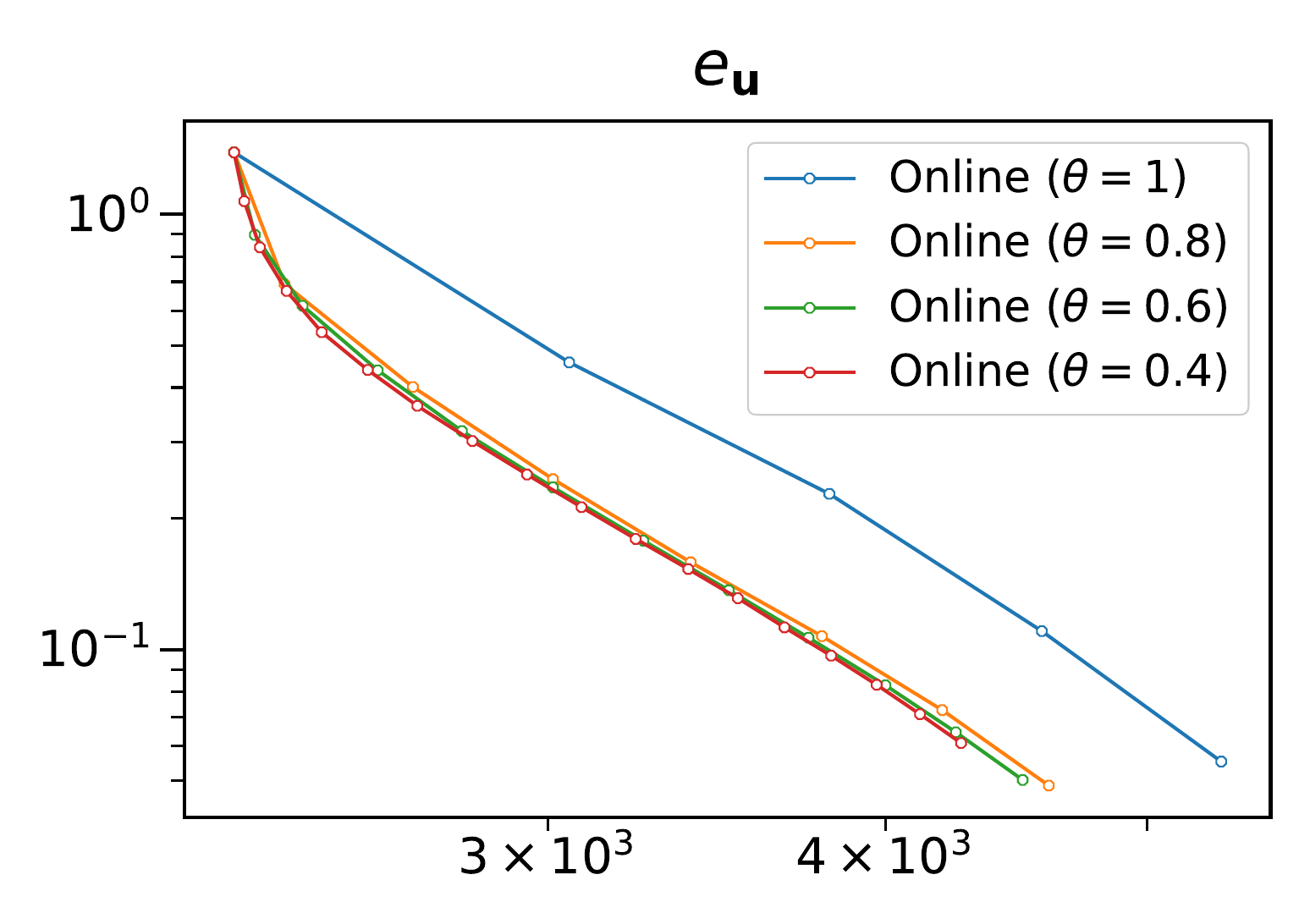}
\caption{}
\end{subfigure}
\begin{subfigure}[b]{0.49\textwidth}
\includegraphics[width=\linewidth]{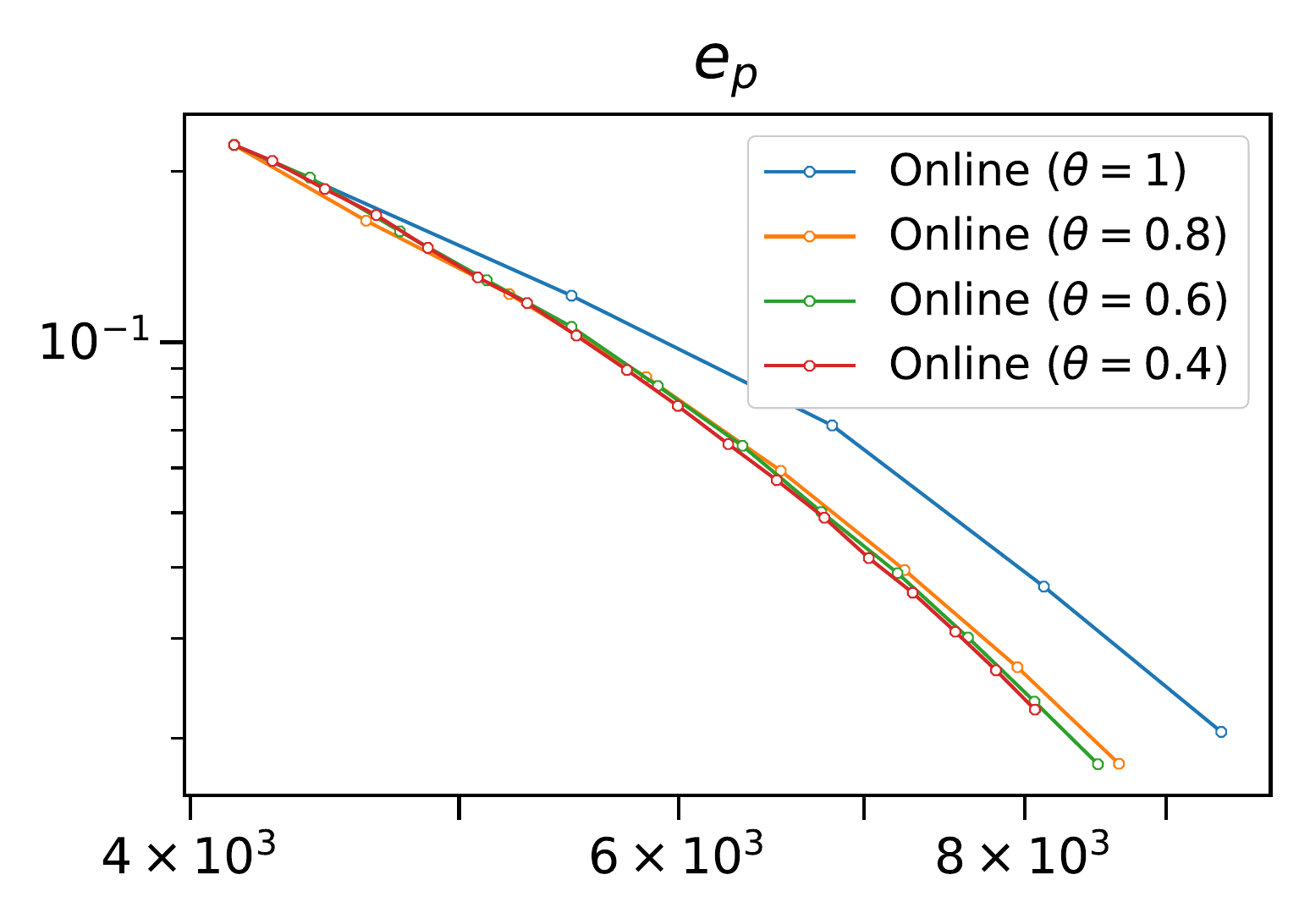}
\caption{}
\end{subfigure}
\begin{subfigure}[b]{0.49\textwidth}
\includegraphics[width=\linewidth]{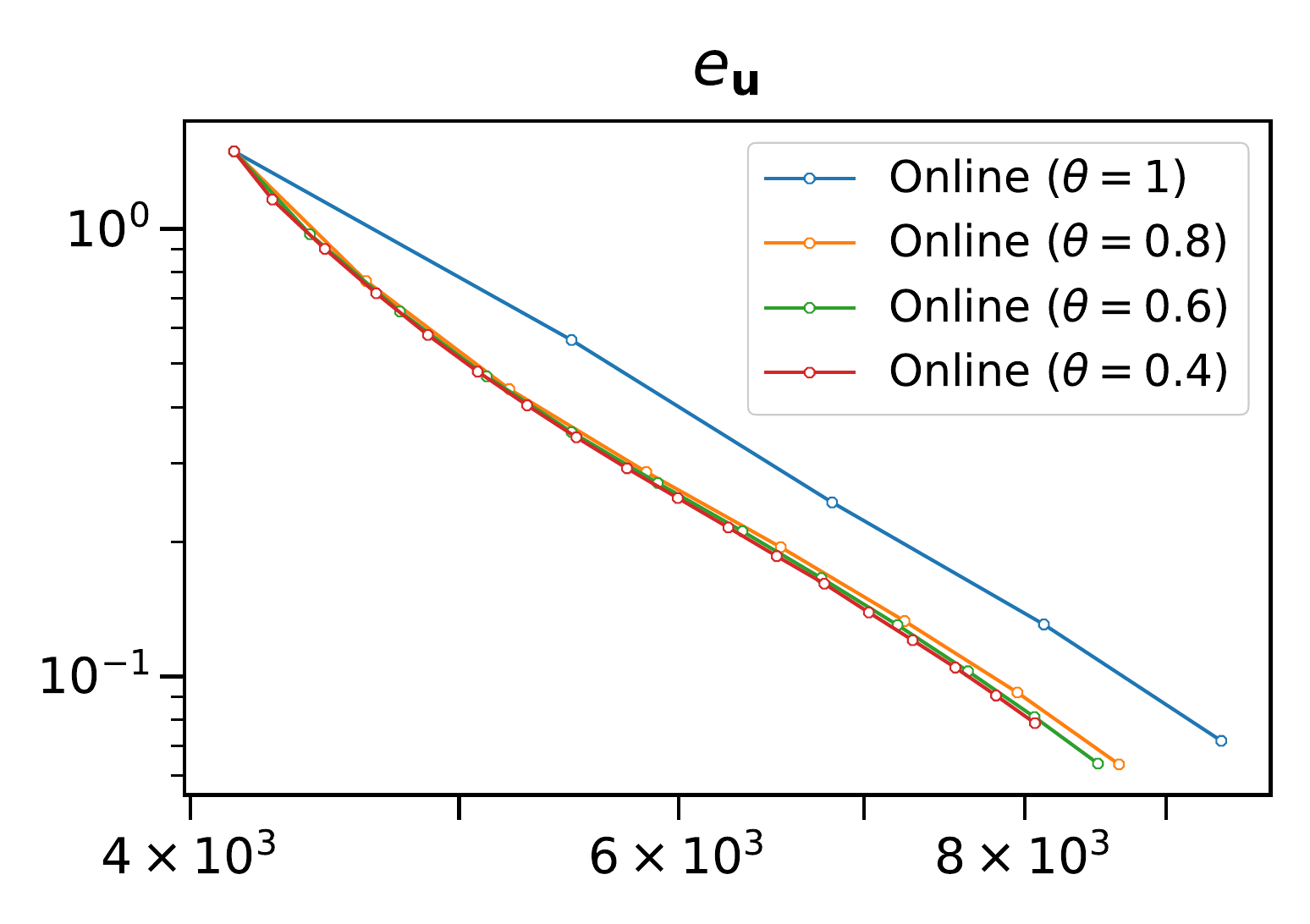}
\caption{}
\end{subfigure}
\caption{Errors with different adaptive parameter $\theta$ for three models: (a, b) Model 1; (c, d) Model 2; (e, f) Model 3.}
\label{fig:er_ontheta}
\end{figure}

\begin{figure}[htbp]
\centering
\begin{subfigure}[b]{0.32\textwidth}
\includegraphics[width=\linewidth]{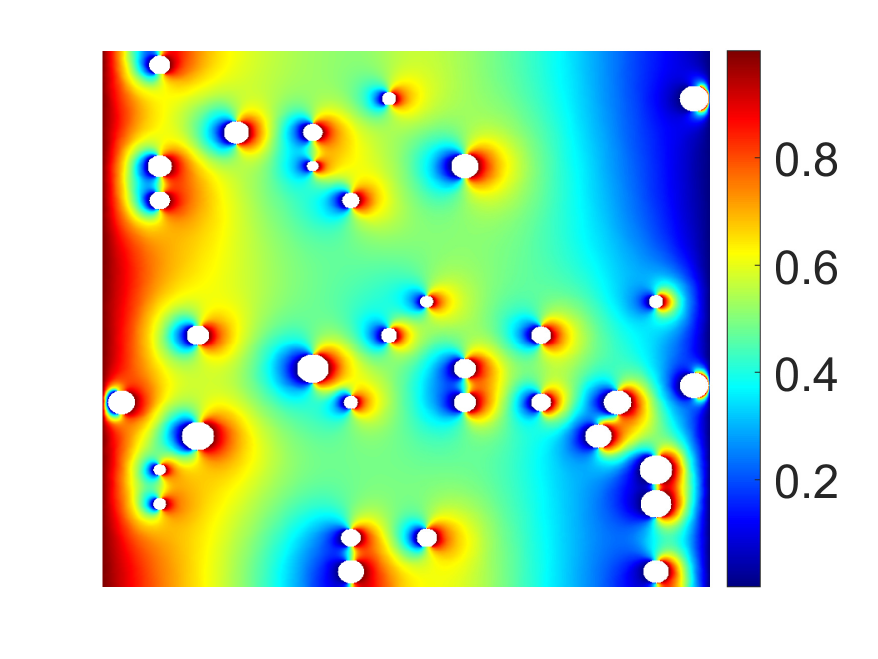}
\caption{}
\end{subfigure}
\begin{subfigure}[b]{0.32\textwidth}
\includegraphics[width=\linewidth]{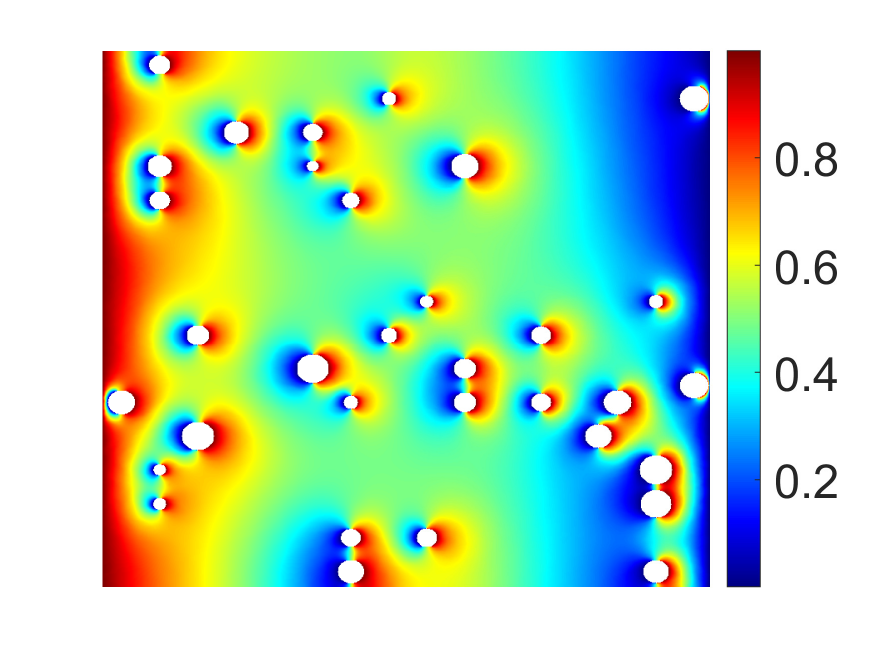}
\caption{}
\end{subfigure}
\begin{subfigure}[b]{0.32\textwidth}
\includegraphics[width=\linewidth]{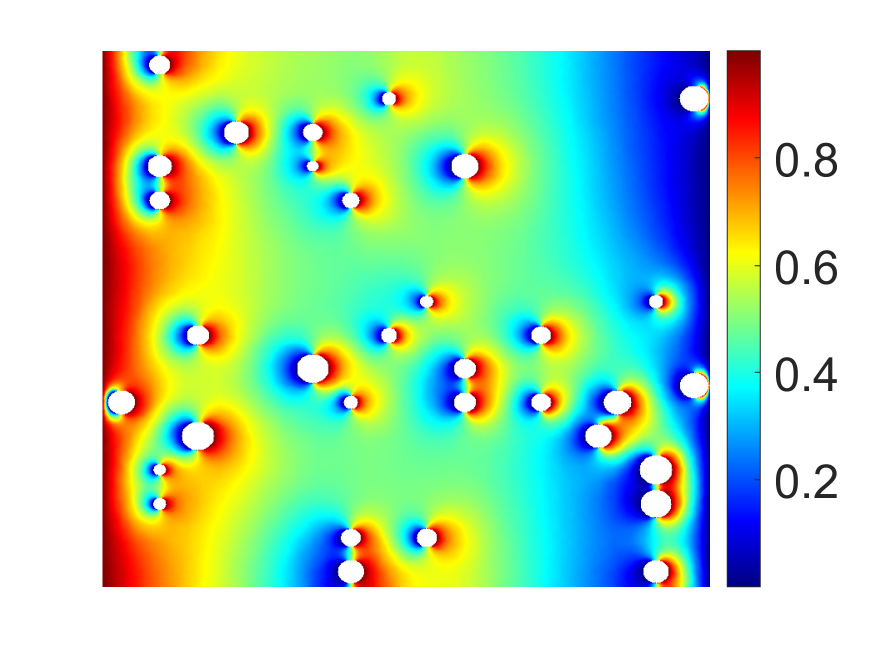}
\caption{}
\end{subfigure}
\caption{Numerical solutions for pressure in Model 1. (a) Reference solution; (b) Uniform enrichment online solution with $L=3$ and \texttt{Iter}=5;
(c) Adaptive enrichment online solution with $L=3$, $\theta=0.6$, and \texttt{Iter}=12.}
\label{fig:ex1_phms}
\end{figure}

\begin{figure}[htbp]
\centering
\begin{subfigure}[b]{0.32\textwidth}
\includegraphics[width=\linewidth]{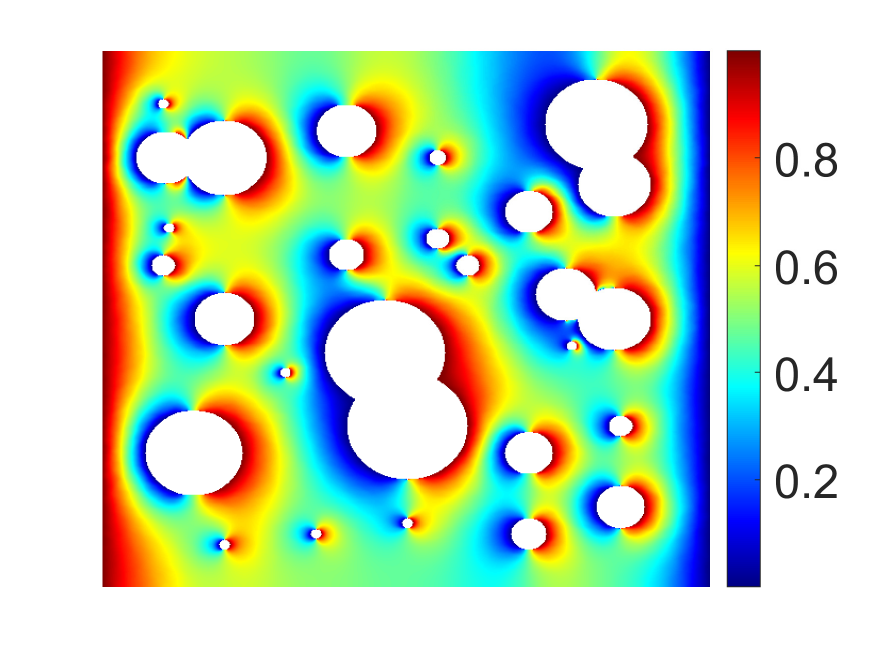}
\caption{}
\end{subfigure}
\begin{subfigure}[b]{0.32\textwidth}
\includegraphics[width=\linewidth]{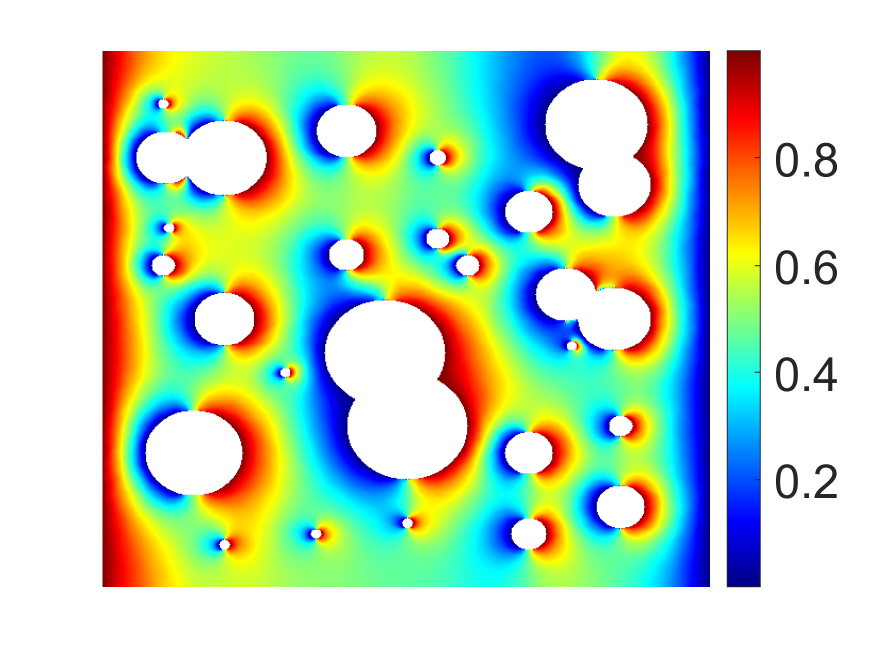}
\caption{}
\end{subfigure}
\begin{subfigure}[b]{0.32\textwidth}
\includegraphics[width=\linewidth]{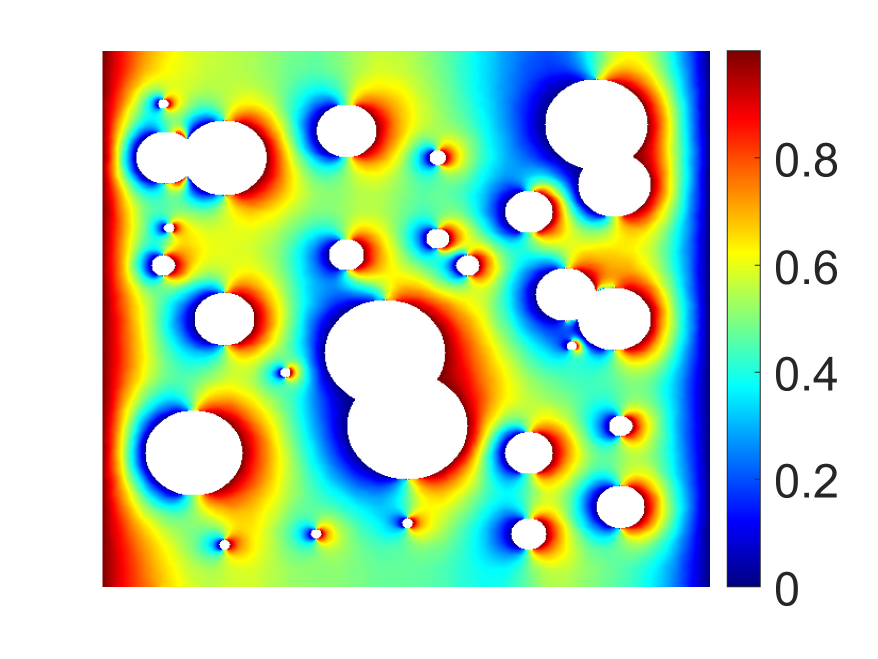}
\caption{}
\end{subfigure}
\caption{Numerical solutions for pressure in Model 2. (a) Reference solution; (b) Uniform enrichment online solution with $L=3$ and \texttt{Iter}=5;
(c) Adaptive enrichment online solution with $L=3$, $\theta=0.6$, and \texttt{Iter}=12.}
\label{fig:ex2_phms}
\end{figure}

\begin{figure}[htbp]
\centering
\begin{subfigure}[b]{0.32\textwidth}
\includegraphics[width=\linewidth]{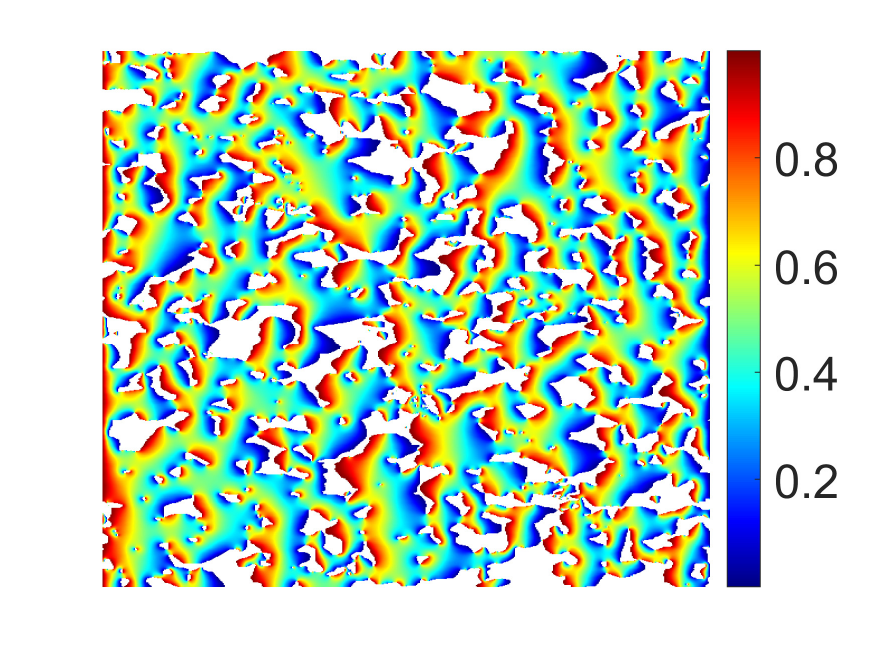}
\caption{}
\end{subfigure}
\begin{subfigure}[b]{0.32\textwidth}
\includegraphics[width=\linewidth]{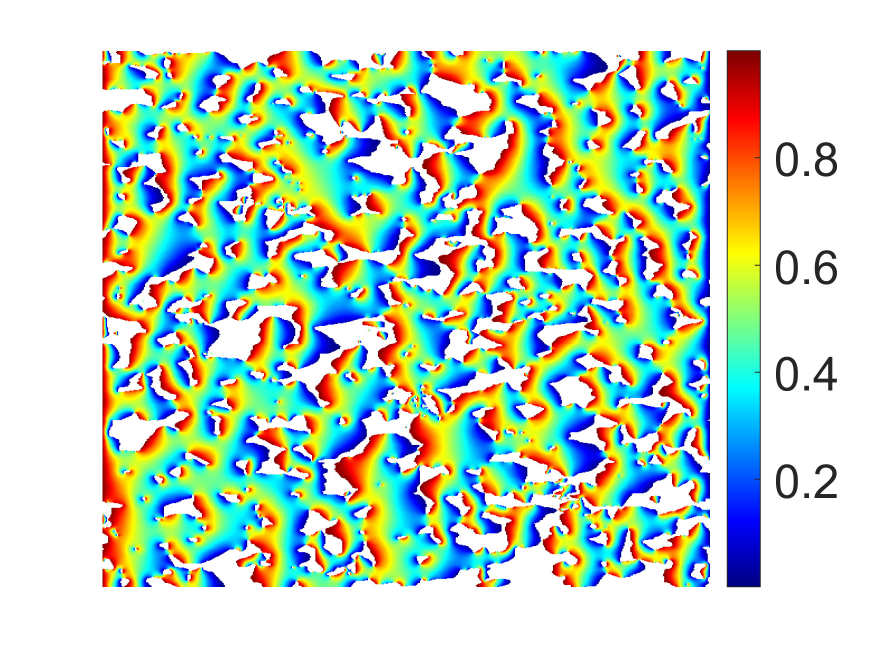}
\caption{}
\end{subfigure}
\begin{subfigure}[b]{0.32\textwidth}
\includegraphics[width=\linewidth]{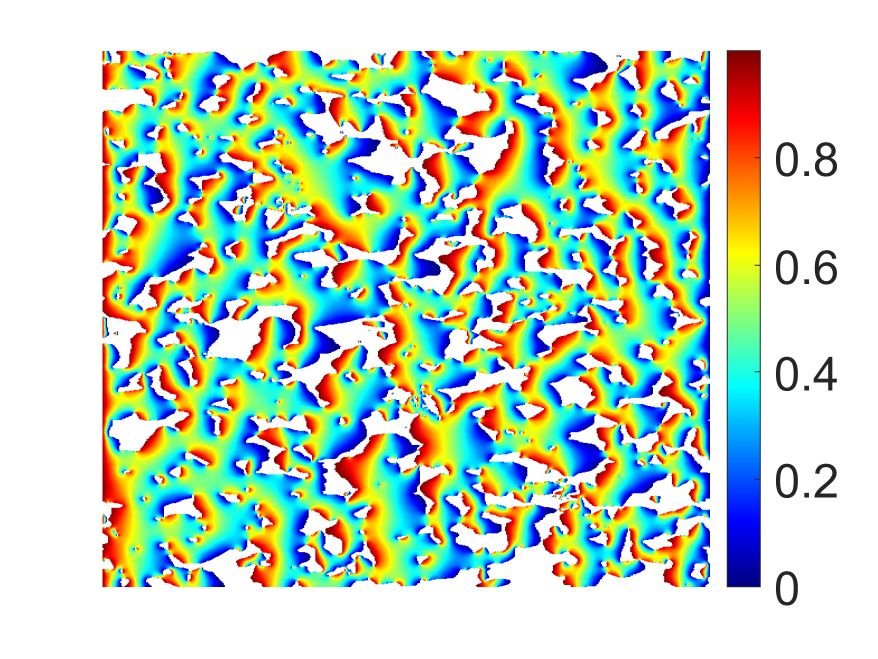}
\caption{}
\end{subfigure}
\caption{Numerical solutions for pressure in Model 3. (a) Reference solution; (b) Uniform enrichment online solution with $L=3$ and \texttt{Iter}=10;
(c) Adaptive enrichment online solution with $L=3$, $\theta=0.8$, and \texttt{Iter}=12.}
\label{fig:ex3_phms}
\end{figure}

\section{Conclusions} \label{sec:conclusions}
In this work, we propose an efficient multiscale method within the framework of the Generalized Multiscale Finite Element Method (GMsFEM) for solving the Darcy equation in perforated domains. To simplify the underlying mixed formulation, we apply the trapezoidal rule within the lowest-order Raviart–Thomas (RT0) finite element space, resulting in a diagonal velocity mass matrix. This allows us to eliminate the velocity variable and reformulate the original mixed velocity-pressure system as a pressure-only problem.
We construct two types of multiscale spaces for the pressure variable: an \emph{offline} space and an \emph{online} space. The offline space is generated via local spectral decomposition, effectively capturing the influence of heterogeneity in both the permeability field and geometric structures. The online space is adaptively enriched by solving local problems driven by residual information, thereby incorporating global features into the local approximation.
A rigorous numerical analysis is provided to support the convergence properties of the proposed method. Numerical experiments on representative benchmark problems demonstrate the efficiency and accuracy of the approach, and confirm the consistency between theoretical predictions and observed performance.

\bibliographystyle{abbrv}
\bibliography{references}
\end{document}